\DeclareFontFamily{OMS}{rsfs}{\skewchar\font'60}
\DeclareFontShape{OMS}{rsfs}{m}{n}{<-5>rsfs5 <5-7>rsfs7 <7->rsfs10 }{}
\DeclareSymbolFont{rsfs}{OMS}{rsfs}{m}{n}
\DeclareSymbolFontAlphabet{\scr}{rsfs}
\newcommand{\splitpageyesno}[2]{#2}
\numberwithin{equation}{section}
\newtheorem{Theorem}[equation]{Theorem}
\newtheorem{Proposition}[equation]{Proposition}
\theoremstyle{definition}
\newtheorem{Definition}[equation]{Definition}
\newtheorem{Remark}[equation]{Remark}
\newtheorem{Example}[equation]{Example}
\newcounter{thmItem}
\newcounter{textItem}
\newcounter{condItem}
\newcommand{\ainfty}{A_{\infty}}
\DeclareMathOperator{\Alg}{Alg}
\DeclareMathOperator{\CommAlg}{CommAlg}
\DeclareMathOperator{\Aut}{Aut}
\newcommand{\anomaly}[1]{\mathcal{A} (#1)}
\newcommand{\Bstring}{B\Gstring}
\newcommand{\BSpinc}{BSpin^{c}}
\newcommand{\Bspinc}{\BSpinc}
\newcommand{\Spinc}{Spin^{c}}
\newcommand{\CatOf}[1]{(\text{#1})}
\newcommand{\C}{\mathbb{C}}
\newcommand{\cochars}{\check{T}}
\DeclareMathOperator*{\colim}{colim}
\newcommand{\CQ}{C_{q}}
\newcommand{\dfcc}{\phi}
\newcommand{\einfty}{E_{\infty}}
\newcommand{\eqdef}{\overset{\text{def}}{=}}
\DeclareMathOperator{\End}{End}
\newcommand{\Fred}{\mathcal{F}}
\DeclareMathOperator{\Fun}{Fun}
\newcommand{\glsym}{gl_{1}}
\newcommand{\GLsym}{GL_{1}}
\newcommand{\gl}[1]{\glsym #1}
\newcommand{\GL}[1]{\GLsym #1}
\newcommand{\gpoid}[2]{
\xymatrix{
{#1}
 \ar@<.5ex>[r]
 \ar@<-.5ex>[r]
&
{#2}
}
}
\newcommand{\Gstring}{String}
\DeclareMathOperator{\ho}{ho}
\newcommand{\heq}{\simeq}
\newcommand{\htp}{\simeq}
\newcommand{\Hilb}{\mathcal{H}}
\DeclareMathOperator*{\holim}{holim}
\DeclareMathOperator{\Ho}{Ho}
\newcommand{\HTT}[1]{[HTT, #1]}
\renewcommand{\i}{\infty}
\newcommand{\iso}{\cong}
\newcommand{\linf}{\Omega^{\infty}}
\newcommand{\locsys}[1]{\{#1\}}
\newcommand{\Line}[1]{\mathrm{Line}_{{#1}}}
\newcommand{\lsb}[1]{( \! (#1) \! )}
\renewcommand{\L}{\mathscr{L}}
\DeclareMathOperator{\map}{map}
\newcommand{\Mod}[1]{\mathrm{Mod}_{{#1}}}
\newcommand{\mstring}{M\Gstring}
\DeclareMathOperator{\orient}{orient}
\renewcommand{\O}{\mathcal{O}}
\newcommand{\op}{\mathrm{op}}
\DeclareMathOperator{\Pic}{Pic}
\newcommand{\plus}{+}
\newcommand{\pt}{+}
\newcommand{\ptit}[1]{#1_{\pt}}
\newcommand{\ptspace}{*}
\newcommand{\Rmod}{\mbox{$\mathrm{R}$-$\mathrm{mod}$}}
\newcommand{\red}[1]{\widetilde{#1}}
\newcommand{\R}{\mathbb{R}}
\DeclareMathOperator{\spectra}{\mathscr{S}}
\DeclareMathOperator{\spaces}{\mathscr{T}}
\newcommand{\Un}{\mathrm{Un}}
\newcommand{\Set}{\mathrm{Set}}
\newcommand{\slot}{\,-\,}
\newcommand{\splus}{\Sigma^{\infty}_{\pt}}
\newcommand{\sinf}{\Sigma^{\infty}}
\DeclareMathOperator{\Sing}{Sing}
\DeclareMathOperator{\spf}{spf}
\newcommand{\Smash}{\wedge}
\DeclareMathOperator{\Stab}{Stab}
\newcommand{\Sp}{\mathrm{Sp}}
\newcommand{\T}{\mathbb{T}}
\newcommand{\TMF}{tmf}
\newcommand{\Tors}[1]{\mathrm{Tors} (#1)}
\DeclareMathOperator{\Tor}{Tor}
\newcommand{\Triv}[1]{\mathrm{Triv} (#1)}
\newcommand{\uln}[1]{\underline{#1}}
\newcommand{\UnivA}{\mathscr{M}}
\newcommand{\UnivL}{\mathscr{L}}
\newcommand{\xra}[1]{\xrightarrow{#1}}
\newcommand{\bN}{\mathbb{N}}
\newcommand{\Z}{\mathbb{Z}}
\begin{document}

\title{Twists of $K$-theory and $TMF$}
\author[Ando]{Matthew Ando}
\address{Department of Mathematics \\
The University of Illinois at Urbana-Champaign \\
Urbana IL 61801 \\
USA} \email{mando@math.uiuc.edu}
\thanks{M.~Ando was supported in part by NSF grant DMS-0705233.}
\author[Blumberg]{Andrew J. Blumberg}
\address{Department of Mathematics, University of Texas,
Austin, TX \ 78703}
\email{blumberg@math.utexas.edu}
\thanks{A.~J.~Blumberg was supported in part by NSF grant DMS-0906105}
\author[Gepner]{David Gepner}
\address{Department of Mathematics, The University of Illinois at Chicago,
Chicago IL 60607 \\ USA}
\email{gepner@uic.edu}
\thanks{}

\begin{abstract}
We explore an approach to twisted generalized cohomology from the
point of view of stable homotopy theory and $\i$-category theory
provided by \cite{0810.4535v3}.  We explain the relationship to the
twisted $K$-theory provided by Fredholm bundles.  We show how this
approach allows us to twist elliptic cohomology by degree four
classes, and more generally by maps to the four-stage Postnikov system
$BO\langle 0\dots 4\rangle.$ We also discuss Poincar\'e duality and
umkehr maps in this setting.
\end{abstract}

\maketitle

\tableofcontents

\section{Introduction}
\label{sec:introduction}

In \cite{0810.4535v3}, we and our co-authors generalize the classical
notion of Thom spectrum.  Let $R$ be an $A_{\infty}$ ring spectrum:
it has a space of units $\GL{R}$ which deloops to give a classifying
space $B\GL{R}.$  To a space $X$ and a map
\[
   \xi\colon X \to B\GL{R}
\]
we associate an $R$-module Thom spectrum $X^{\xi}$.  Letting $S$
denote the sphere spectrum, one finds  that $B\GL{S}$ is the classifying space
for stable spherical fibrations of virtual rank $0$, and $X^{\xi}$ is
equivalent to the classical Thom spectrum of the spherical fibration
classified by $\xi$ (as in, for example, \cite{LMS:esht}).

We remark in the introduction to \cite{0810.4535v3} that $B\GL{R}$
classifies the \emph{twists} of $R$-theory.  More
precisely, we define the $\xi$-twisted $R$-homology of $X$ to be
\[
     R_{k} (X)_{\xi} \eqdef \pi_{0}\Rmod (\Sigma^{k}R,X^{\xi}) \iso  \pi_{k}X^{\xi}
\]
and the $\xi$-twisted cohomology to be
\[
     R^{k} (X)_{\xi} \eqdef  \pi_{0}\Rmod (X^{\xi},\Sigma^{k}R),
\]
where here $\Sigma^{k}$ denotes the $k$-fold suspension, or
equivalently smashing with $S^k$.

In this paper, we expand on that remark, explaining how this definition
generalizes both singular cohomology with local coefficients and the
twists of $K$-theory studied by \cite{MR0282363,MR1018964,MR2172633}.
The key maneuver is to focus on the $\i$-categorical approach to Thom
spectra developed in \cite{0810.4535v3} (where by $\i$-categories we
mean the quasicategories of \cite{MR1935979,math.CT/0608040}).   We
show that the $\i$-category $\Line{R}$ of $R$-modules $L$ which admit
a weak equivalence $R\heq L$ is a model for $B\GL{R}$: we have a weak
equivalence of spaces
\[
    |\Line{R}| \heq B\GL{R}.
\]
One appeal of $\Line{R}$ is that, by construction, it classifies what one might call
``homotopy local systems'' of free rank-one
$R$-modules.  This flexible notion generalizes both classical local
coefficient systems and bundles of spaces (such as bundles of Fredholm
operators).  As one might expect, our work is closely related to the
parametrized spectra of May and Sigurdsson \cite{MR2271789}; we
discuss the relationship further in Section~\ref{s:param}.

As applications of our approach to twisted generalized cohomology, we
explain how the twisting of $K$-theory by degree three cohomology is
related to the $\Spinc$ orientation of Atiyah-Bott-Shapiro.
Similarly, recall that there is a map (unique up to homotopy)
\[
    BSpin \xra{\lambda} K (\Z,4)
\]
whose restriction to $BSU$ is the second Chern class.  The fiber of
$\lambda$ is called $BString$, and if $V$ is a $Spin$ vector bundle on
$X$, then a \emph{$String$} structure on $V$ is a trivialization of
$\lambda (V);$ that is, a map $g$ in the diagram
\[
\xymatrix{
&
{BString}
 \ar[d]^{\pi}\\
{X} \ar[r]^-{V}
 \ar@{-->}[ur]^{g}
&
{BSpin,}
}
\]
together with a homotopy $\pi g\Rightarrow V.$

The work of Ando, Hopkins, and Rezk \cite{AHR:orientation} constructs
an $\einfty$ $String$ orientation of $\TMF$, the spectrum of topological
modular forms.   (The discussion in this paper applies equally to the
connective spectrum $tmf$ and to the periodic spectrum $TMF$.)
Associating to a vector bundle its underlying
spherical fibration gives a map $BSpin \to B\GL{S},$ and associated to
the unit $S \to \TMF$ is a map $B\GL{S} \to B\GL{\TMF}.$  Composing
these, we have a map
\[
    k\colon BSpin \to B\GL{\TMF}.
\]
We show that the $\einfty$ $String$ orientation of
$\TMF$ of
\cite{AHR:orientation} implies the following.

\begin{Theorem} \label{t-th-TMF-twists-intro}
\hspace{5 pt}
\begin{enumerate}
\item $\TMF$ admits twists by degree-four integral cohomology.  More
precisely, there is a map $h\colon K (\Z,4) \to B\GL{\TMF}$
making the diagram
\[
\begin{CD}
      BSpin @> k >> B\GL{\TMF} \\
      @V \lambda VV @VV = V \\
      K (\Z,4) @> h >> B\GL{\TMF}
\end{CD}
\]
commute up to homotopy.  Thus given a map
$z\colon X \to K (\Z,4)$ (representing a class in
$H^{4} (X,\Z)$) we can define
\[
      \TMF^{*} (X)_{z} \eqdef \TMF^{*} (X)_{hz}.
\]
\item If $V$ is a $Spin$-bundle over $X$, classified by
\[
       X \xra{V} BSpin,
\]
then a homotopy $h\lambda (V) \Rightarrow k (V)$ determines an
isomorphism
\[
      \TMF^{*} (X^{V}) \iso  \TMF^{*}(X)_{\lambda (V)}.
\]
of modules over $\TMF^{*} (X).$
\end{enumerate}
\end{Theorem}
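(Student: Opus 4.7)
The plan is to deduce the theorem from the $\einfty$ $\Gstring$-orientation of $\TMF$ constructed by Ando--Hopkins--Rezk, interpreted through the $\infty$-categorical formalism of \cite{0810.4535v3}. That formalism identifies $\einfty$ orientations $\mstring \to \TMF$ with $\einfty$ null-homotopies of the composite classifying map $B\Gstring \to B\GL{S} \to B\GL{\TMF}$. Since the unit $S \to \TMF$ is an $\einfty$ map of ring spectra, both $B\GL{S} \to B\GL{\TMF}$ and the spherical-fibration map $BSpin \to B\GL{S}$ are maps of group-like $\einfty$-spaces, so the resulting null-homotopy of the restriction of $k$ to $B\Gstring$ is $\einfty$, hence an infinite-loop, null-homotopy.

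For part (i), I would then use that the defining fiber sequence $B\Gstring \to BSpin \xra{\lambda} K (\Z,4)$ is one of infinite loop spaces, i.e.\ it comes from a fiber (equivalently cofiber) sequence of connective spectra $\bstring \to bspin \xra{\lambda} \Sigma^{4} H\Z$. The null-homotopy above therefore extends the map $bspin \to \Sigma \gl{\TMF}$ delooping $k$ across this cofiber, producing a map of connective spectra $\Sigma^{4} H\Z \to \Sigma \gl{\TMF}$ together with a compatible homotopy. Applying $\linf$ then yields $h\colon K (\Z,4) \to B\GL{\TMF}$ and a homotopy witnessing $h\lambda \heq k$.

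For part (ii), I would unwind the definitions. The composite $k V\colon X \to BSpin \to B\GL{S} \to B\GL{\TMF}$ classifies, by the base-change description of $R$-module Thom spectra in \cite{0810.4535v3}, the $\TMF$-module $X^{V} \Smash \TMF$. A homotopy $h\lambda(V) \Rightarrow k V$ induces, by the homotopy functoriality of the Thom construction (equivalently by the classification of $\TMF$-twists via $\Line{\TMF}$), a $\TMF$-module equivalence $X^{\lambda(V)} \heq X^{V} \Smash \TMF$. Applying $\pi_{0}\Mod{\TMF} (\slot,\Sigma^{*}\TMF)$ yields
\[
\TMF^{*} (X)_{\lambda(V)} \iso \TMF^{*} (X^{V} \Smash \TMF) \iso \TMF^{*} (X^{V}),
\]
and this is $\TMF^{*}(X)$-linear because the equivalence of Thom spectra is compatible with the diagonal action of $X_{+}$, which induces the module structure on either side.

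The main technical obstacle will be to confirm that the machinery of \cite{0810.4535v3} genuinely produces, for an $\einfty$ orientation, an $\einfty$ (equivalently, infinite loop) null-homotopy of the classifying map to $B\GL{\TMF}$, rather than merely a null-homotopy of underlying spaces; the cofiber-sequence construction of $h$ requires the stronger infinite-loop statement. Once this compatibility is verified, both parts of the theorem follow formally from the universal property of cofiber sequences of connective spectra together with the homotopy invariance of the Thom-spectrum construction of \cite{0810.4535v3}.
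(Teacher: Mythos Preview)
Your argument is correct, but the paper takes a slightly different and more direct route for part~(i). Rather than interpreting the $\einfty$ orientation as an infinite-loop null-homotopy and then extending across the cofiber sequence $\bstring \to bspin \to \Sigma^{4}H\Z$, the paper observes that the fiber inclusion $K(\Z,3)\to\Bstring$ induces a map of commutative $S$-algebras $\splus K(\Z,3)\to\mstring$, and then simply composes with the orientation $\sigma\colon\mstring\to\TMF$ to obtain a map of commutative $S$-algebras $\splus K(\Z,3)\to\TMF$. The $(\splus\linf,\glsym)$ adjunction \eqref{eq:46} then automatically produces an infinite-loop map $K(\Z,3)\to\GL{\TMF}$, which deloops once to give $h\colon K(\Z,4)\to B\GL{\TMF}$. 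This bypasses precisely the technical obstacle you flag: because the adjunction is between $\i$-categories of commutative $S$-algebras and group-like $\einfty$ spaces, the infinite-loop structure on the adjoint map is free, and one never needs to separately verify that a null-homotopy is $\einfty$. Your cofiber-sequence argument is of course equivalent after the fact, and is perhaps more transparent about why the square commutes; the paper's approach trades that for not having to check any compatibility of null-homotopies. For part~(ii) your argument coincides with the paper's.
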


Theorem \ref{t-th-TMF-twists-intro} is well-known to the experts (e.g.
Hopkins, Lurie, Rezk, and Strickland).  As the reader will see, with
the approach to twisted cohomology presented here, it is an immediate
consequence of the $String$ orientation.

We also explain how twisted generalized cohomology is related to
Poincar\'e duality.  We briefly describe some work
in preparation, concerning twisted umkehr maps in generalized cohomology.
As  special cases, we recover the twisted $K$-theory umkehr map
constructed by Carey and Wang, and we construct an umkehr map
in twisted elliptic cohomology.  As we explain, our interest in the
twisted elliptic cohomology umkehr map arose from conversations with
Hisham Sati.

Finally, we report two applications of twisted equivariant elliptic
cohomology: we recall a result (due independently to the first author
\cite{Ando:EllLG} and Jacob Lurie), relating twisted equivariant
elliptic cohomology to representations of loop groups, and we explain
work of the first author and John Greenlees, relating twisted equivariant
elliptic cohomology to the equivariant sigma orientation.


\begin{Remark}
If $E$ is a cohomology theory and $X$ is a space, then $E^{*} (X)$
will refer to the \emph{unreduced} cohomology.  If $Z$ is a spectrum,
then we write $E^{*} (Z)$ for the spectrum cohomology.   We write
$\splus$ for the functor
\[
   \splus\colon \CatOf{spaces} \xra{\text{disjoint basepoint}}
   \CatOf{pointed spaces} \xra{\sinf} \CatOf{spectra},
\]
so we have by definition
\[
    E^{*} (X) \iso E^{*} (\splus X),
\]
while the reduced cohomology is
\[
   \red{E}^{*} (X)\iso E^{*} (\sinf X).
\]
If $V$ is a vector bundle over $X$ of rank $r$,  then we write $X^{V}$
for its Thom \emph{spectrum}:
this is equivalent to the suspension spectrum of the Thom space,
so $E^{*} (X^{V})$ is  the reduced cohomology of the Thom space.  Thus
the Thom isomorphism, if it exists, takes the form
\[
   E^{*} (X) \iso E^{*+r} (X^{V}).
\]
\end{Remark}

\begin{Remark}
The $\i$-category $\Line{R}$ is not the largest category we could use
to construct twists of $R$-theory.  If $R$ is an $\einfty$ ring
spectrum, i.e. a commutative ring spectrum, then we could consider the
$\i$-category $\Pic (R)$, consisting of invertible $R$-modules:
$R$-modules $L$ for which there exists an $R$-module $M$ such that
$L\Smash_{R}M \heq R$.
\end{Remark}
\bigskip
\noindent{\bf Acknowledgments}

This paper is the basis for a talk given by the first author at
the CBMS conference on $C^{*}$-algebras, topology, and physics at Texas
Christian University in May 2009.  We thank the organizers, Bob Doran and Greg
Friedman, for the opportunity.   It is a pleasure to acknowledge
stimulating conversations with Alan Carey,  Dan Freed, and Hisham Sati
which directly influenced this write-up.

\section{Classical examples of twisted generalized cohomology}
\label{sec:class-exampl-twist}

\subsection{Geometric models for twisted $K$-theory}\label{sec:geom-models-twist}

Let $\Hilb$ be a complex Hilbert space, and let $\Fred$ be its space
of Fredholm operators.  Then $\Fred$ is a representing space for
$K$-theory: Atiyah showed \cite{MR40:2053} that
\[
    K (X) \iso \pi_{0}\map (X,\Fred) = \pi_{0}\Gamma (X\times \Fred\to
    X).
\]
Atiyah and Segal \cite{MR2172633} develop the following approach to
twisted $K$-theory.    The unitary group $U=U (\Hilb )$ of $\Hilb$ acts on the
space $\Fred$ of Fredholm operators by conjugation.   Associated to a
principal $PU$-bundle $P\to X$, then, we can form the bundle
\[
   \xi = P \times_{PU} \Fred \to X
\]
with fiber $\Fred$.  They define the $P$-twisted $K$-theory of $X$ to be
\[
    K (X)_{P} = \pi_{0}\Gamma (\xi\to X).
\]
Thus one twists $K (X)$ by $PU$-bundles over $X$;
isomorphism classes of these are classified by $\pi_{0}\map (X,BPU);$
as $BPU$ is a model for $K (\Z,3)$, we have have
$\pi_{0}\map (X,BPU) \iso H^{3} (X;\Z).$

We warn the reader that this summary neglects
important and delicate issues which Atiyah and Segal address with
care, for example concerning the choice of topology on $U$ and $PU$.

Another approach to twisted $K$-theory passes through algebraic
$K$-theory; again we neglect important operator-theoretic
matters, referring the reader to \cite{MR1018964,MR1756434,BCMMS}
for details.  Conjugation induces an action of $PU$ on
the algebra $\mathcal{K}$ of compact operators on $\Hilb.$  Thus from
the $PU$-bundle $P \to X$ we can form the bundle
$P\times_{PU} \mathcal{K}$.  Let $\mathcal{A} = \Gamma (P\times_{PU}
\mathcal{K} \to X).$  This is a (non-unital) $C^{*}$-algebra, and
\[
   K (X)_{P} \iso K (\mathcal{A}).
\]
If $P$ is the trivial bundle, then $\mathcal{A}\iso \map
(X,\mathcal{K})$, and we have isomorphisms
\[
   K (\mathcal{A})\iso K (\map (X,\C)) \iso K (X).
\]

Both of these approaches to twisted $K$-theory are based on the idea
that from a $PU$-bundle we can build a bundle of copies of the
representing space for $K$-theory, and both have had a number of
successes.  They demand a good deal of information about $K$-theory, and
they exploit features of models of $K$-theory which may not be
available in other cohomology theories.

May and Sigurdsson show how to implement the construction of Atiyah
and Segal in the setting of their theory of parametrized stable
homotopy theory \cite[\S 22]{MR2271789}.  Specifically, they give a
construction of certain twisted cohomology theories associated to
parametrized spectra, and explain how the Atiyah-Segal definition fits
into their framework.  However, the approach of May and Sigurdsson
also takes advantage of good features of known models for K-theory
which may not be available in other cohomology theories.

In this paper we explain another way to locate twisted $K$-theory in
stable homotopy theory.  Our constructions continue to demand a
good deal of $K$-theory, for example that it be an $\ainfty$ or
$\einfty$ ring spectrum,  but many generalized cohomology theories $E$
satisfy our demands, and so our approach works in those cases as well.

Our approach incorporates and generalizes the construction of Thom
spectra of vector bundles, and so clarifies standard results
concerning twisted $E$-theory, such as the relationship to the Thom
isomorphism and Poincar\'e duality.  It also generalizes the classical
notion of (co)homology with local coefficients, as we now explain.

\subsection{Cohomology with local coefficients} \label{sec:cohom-with-local}

Let $X$ be a space, and let $\Pi_{\leq 1} (X)$ be its fundamental
groupoid.   We recall that a \emph{local coefficient system} on $X$ is
a functor\footnote{Since the fundamental groupoid is a groupoid, it is
equivalent to consider covariant or contravariant functors.}
\[
      \locsys{A}\colon \Pi_{\leq 1} (X) \to \CatOf{Abelian groups}.
\]
Given a local system $\locsys{A}$ on $X$, we can form the twisted singular
homology $H_{*} (X;\locsys{A})$ and cohomology  $H^{*} (X;\locsys{A})$.

\begin{Example}\label{ex-Serre-fib-local-coeff-sys}
For example, if $\pi\colon E \to X$ is a Serre fibration, then associating
to a point $p\in X$ the fiber $F_{p}= \pi^{-1} (p)$ gives rise to a
representation
\[
   F_{\bullet}\colon   \Pi_{\leq 1} (X) \xra{} \Ho \CatOf{spaces}.
\]
(Here $\Ho$ denotes the homotopy category obtained by inverting the
weak equivalences.)
Applying singular cohomology in degree $r$ produces the local
coefficient system $\locsys{H^{r} (F_{\bullet})}$.
\end{Example}

\begin{Example}
If $V$ is a vector bundle over $X$ of rank $r$, then taking the
fiberwise one-point compactification $V^{\plus}$ provides a Serre
fibration, and so we have the local coefficient system
\begin{equation}\label{eq:44}
      \locsys{\red{H}^{r} (V^{\plus}_{\bullet});\Z}
\end{equation}
whose value at $p\in X$ is the  cohomology group $\red{H}^{r}
(V^{\plus}_{p};\Z).$
\end{Example}

From the Serre spectral
sequence it follows that there is an isomorphism
\[
 H^{*} (X;\locsys{\red{H}^{r} (V^{\plus}_{\bullet})})
\iso H^{*+r} (X^{V};\Z)
\]
between the twisted cohomology of $X$ with coefficients in the system
$\locsys{\red{H}^{r} (V^{\plus}_{\bullet})}$ and the cohomology of
the Thom spectrum of $V$.

An \emph{orientation} of $V$ is a trivialization of the local system
\eqref{eq:44}, that is, an isomorphism of functors
\[
   \locsys{\red{H}^{r} (V^{\plus}_{\bullet})} \iso \Z
\]
(where $\Z$ denotes the evident constant functor).  It follows
immediately that an orientation of $V$ determines a Thom
isomorphism
\[
      H^{*} (X;\Z) \iso H^{*+r} (X^{V};\Z).
\]

\section{Bundles of module spectra}
\label{sec:bundl-module-spectra}

\subsection{The problem}
\label{sec:bundles-spectra-problem}

Let $X$ be a space.  We seek a notion of local system of spectra $\xi$
on $X$, generalizing the bundles of Fredholm operators in
\S\ref{sec:geom-models-twist} and the local systems of
\S\ref{sec:cohom-with-local}.
In particular, if $E\to X$ is a Serre fibration as in Example
\ref{ex-Serre-fib-local-coeff-sys}, then the classical local system
$\locsys{H^{r} (F_{\bullet})}$ should arise from a bundle of spectra
$F_{\bullet}\Smash H\Z$ by passing to homotopy groups.

From this example, we quickly see that while it is reasonable to ask
$\xi$ to associate to each point $p \in X$ a spectrum $\xi_{p}$, it
is too much to expect to associate to a path $\gamma\colon I\to X$
from $p$ to $q$ an isomorphism of fibers; instead, we expect a
homotopy equivalence
\[
   \xi_{\gamma}\colon \xi_{p} \to \xi_{q}.
\]
Moreover, an (endpoint-preserving) homotopy of paths $H\colon \gamma\to \gamma'$
should give rise to a path
\[
    \xi_{H}\colon \xi_{\gamma} \to \xi_{\gamma'}
\]
in the space of homotopy equivalences from $\xi_{p}$ to $\xi_{q}$.

These homotopy coherence issues quickly lead one to consider
representations of not merely the fundamental groupoid $\Pi_{\leq 1} (X)$,
but the whole fundamental $\infty$-groupoid $\Pi_{\leq \infty} (X),$
that is, the singular complex $\Sing X$.  Quasicategories make it both natural and inevitable
to consider such representations.

\subsection{$\i$-categories from spaces and from simplicial model categories}
\label{sec:i-categories-model-cats}

Recall that a quasicategory is a simplicial set which has fillings for
all inner horns.  Thus one source of quasicategories is spaces.  If
$X$ is a space, then its singular complex $\Sing X$ is a Kan complex:
it has fillings for all horns.   From the point of view of
quasicategories, where $1$-simplices correspond to morphisms, this
means that all morphisms are invertible up to (coherent higher)
homotopy.  Thus Kan complexes may be identified with
``$\i$-groupoids''.

We also recall (from \HTT{Appendix A and 1.1.5.9})
how simplicial model categories give rise to
quasicategories.  This procedure is an important source of
quasicategories, and it provides intuition about how
quasicategories encode homotopy theory.

If $\mathscr{M}$ is a simplicial model category, then we can
define  $\mathscr{M}^{\circ}$ to be the full subcategory consisting of
cofibrant and fibrant objects.  The simplicial nerve of
$\mathscr{M}^{\circ}$,
\[
   \mathscr{C} = N \mathscr{M}^{\circ},
\]
is the quasicategory associated to $\mathscr{M}$.

By construction, $\mathscr{C}$ is the
simplicial set in which
\begin{enumerate}
\item  the vertices $\mathscr{C}_{0}$ are cofibrant-fibrant objects of
$\mathscr{M}$;
\item  $\mathscr{C}_{1}$ consists of maps
\[
     L \to M
\]
between cofibrant-fibrant objects;
\item $\mathscr{C}_{2}$ consists of diagrams (not necessarily commutative)
\[
\xymatrix{
{L}
 \ar[r]^{f}
 \ar[dr]_{h}
&
{M}
 \ar[d]^{g}
\\
&
{N,}
}
\]
together with a homotopy from $gf$ to $h$ in the mapping space
(simplicial set)
$\mathscr{M} (L,M)$;
\end{enumerate}
and so forth.

In particular, in $\mathscr{C}$ the equivalences correspond to weak
equivalences in $\mathscr{M}^{\circ}$, that is, homotopy
equivalences.  Thus we may sometimes refer to the equivalences in a
quasicategory as weak equivalences or homotopy equivalences.

A simplicial model category $\mathscr{M}$ has an associated homotopy
category $\ho \mathscr{M}$, and an $\i$-category $\mathscr{C}$ has a
homotopy category $\ho \mathscr{C}$.  As one would expect, there is an
equivalence of categories (enriched over the homotopy category of
spaces)
\[
     \ho\mathscr{M} \heq \ho N \mathscr{M}^{\circ}.
\]
By analogy to the model category situation, if $\mathscr{C}$ is a
quasicategory and $\ho\mathscr{C} \heq \mathscr{D}$, then we shall say
that $\mathscr{C}$ is a ``model for $\mathscr{D}$''.

\subsection{The $\i$-category of $A$-modules}
\label{sec:i-cat-a-mod}

Let $\spectra$ be a symmetric monoidal $\i$-category of spectra.
Lurie constructs such an $\i$-category from scratch (\cite{DAGI} introduces
an  $\i$-category of spectra, which is shown to be monoidal in
\cite{DAGII}, and symmetric monoidal in \cite{DAGIII}).  Lurie shows
that his $\i$-category is equivalent to the symmetric monoidal
$\i$-category arising from the symmetric spectra of \cite{MR1695653},
and so by \cite{MR1806878} it is equivalent to the
symmetric monoidal $\i$-categories of spectra arising from various
classical symmetric monoidal simplicial model categories of spectra.
Let $S$ be the sphere spectrum.

\begin{Definition}
An \emph{$S$-algebra} is a monoid (strictly speaking, an algebra, since the relevant monoidal structure is not given by the cartesian product) in
$\spectra$.  We write $\Alg (S)$ for the $\i$-category of
$S$-algebras, and $\CommAlg (S)$ for the $\i$-category of commutative
$S$-algebras.
\end{Definition}

Using \cite{DAGII,DAGIII} and \cite{MR1806878} as above, one learns that
the symmetric monoidal structure on
$\spectra$ is such that $\Alg (S)$ is a model for $\ainfty$ ring
spectra, and $\CommAlg (S)$ is a model
for $\einfty$ ring spectra, so the reader is free to use his or
her favorite method to produce  $\i$-categories equivalent to $\Alg
(S)$ and $\CommAlg (S).$

\begin{Definition}\label{def-Mod-A}
If $A$ is an $S$-algebra, we let $\Mod{A}$ be the $\i$-category of
$A$-modules.
\end{Definition}

\begin{Example}
An $S$-module is just a spectrum, and
so $\Mod{S}$ is the $\i$-category of spectra.
\end{Example}

\subsection{Bundles of spaces and spectra}\label{s:param}

The purpose of this section is to introduce the $\i$-categorical model
of parametrized spectra we work with in this paper and compare it to
the May-Sigurdsson notion of parametrized spectra \cite{MR2271789}.

We begin by reviewing some models for the $\i$-category of spaces over
a cofibrant topological space $X$.  On the one hand, we have the
topological model category $\mathscr{T}/X$ of spaces over $X$,
obtained from the topological model category of spaces by forming the
slice category; i.e., the weak equivalences and fibrations are
determined by the forgetful functor to spaces.  We will refer to this
model structure as the ``standard'' model structure on
$\mathscr{T}/X$.  This is Quillen equivalent to the corresponding
simplicial model category structure on simplicial sets over $\Sing X$,
which in turn is Quillen equivalent to the simplicial model category of
simplicial presheaves on the simplicial category $\mathfrak{C}[\Sing
  X]$ (with, say, the projective model structure) \cite[\S
  2.2.1.2]{math.CT/0608040}.  Here $\mathfrak{C}$ denotes the left
adjoint to the simplicial nerve; it associates a simplicial category
to a simplicial set \cite[\S 1.1.5]{math.CT/0608040}.

\begin{Remark}
The Quillen equivalence between simplicial presheaves
and parametrized spaces depends on the fact that the base is an $\i$-groupoid
(Kan complex) as opposed to an $\i$-category; there is a more general theory of ``right fibrations'' (and, dually, ``left fibrations''),
but over a Kan complex a right fibration is a left fibration (and conversely) and therefore a Kan fibration.
\end{Remark}

On the level of $\i$-categories, this yields an equivalence
\[
\mathrm{St}\colon\mathrm{N}\Set_{\Delta/\Sing X}^\circ\longrightarrow\Fun(\Sing X^{\op},\mathrm{N}\Set_\Delta^\circ);
\]
the map, called the {\em straightening} functor, rigidifies a
fibration over $\Sing X$ into a presheaf of $\i$-groupoids on $\Sing
X$ whose value at the point $x$ is equivalent to the fiber over $x$
\cite[\S 3.2.1]{math.CT/0608040}.

A distinct benefit of the presheaf approach is a particularly
straightforward treatment of the base-change adjunctions.  Given a map
of spaces $f:Y\to X$, we may restrict a presheaf of $\i$-groupoids $F$
on $\Sing X$ to a presheaf of $\i$-groupoids $f^*F$ on $\Sing Y$.
This gives a functor, on the level of $\i$-categories, from spaces
over $X$ to spaces over $Y$, such that the fiber of $f^*F$ over the
point $y$ of $Y$ is equivalent to the fiber of $F$ over $f(y)$.
Moreover, $f^*$ admits both a left adjoint $f_!$ and a right adjoint
$f_*$, given by left and right Kan extension along the map $\Sing
Y^{\op}\to\Sing X^{\op}$, respectively.  Note that this is left and
right Kan extension in the $\i$-categorical sense, which amounts to
homotopy left and right Kan extension on the level of simplicial
categories or model categories.  On the level of model categories of
presheaves, there is an additional subtlety:
\[
f^*:\Fun(\mathfrak{C}[\Sing X^{\op}],\Set_\Delta)\longrightarrow\Fun(\mathfrak{C}[\Sing Y^{\op}],\Set_\Delta)
\]
is a {\em right} Quillen functor for the {\em projective} model
structure, with (derived) left adjoint $f_!$, and a {\em left} Quillen
functor for the {\em injective} model structure, with (derived) right
adjoint $f_*$, on the above categories of (simplicial) presheaves.
Of course the identity adjunction gives a Quillen equivalence between
these two model structures, but nevertheless one is forced to switch
back and forth between projective and injective model structures if
one wishes to simultaneously consider both base-change adjunctions.

Now we may stabilize either of the equivalent $\i$-categories
\[
\mathrm{N}\Set_{\Delta/\Sing X}^\circ\simeq\Fun(\Sing X^{\op},\mathrm{N}\Set_\Delta^\circ)
\]
by forming the
$\i$-category of spectrum objects in $\mathscr{C}_*$; here
$\mathscr{C}_*$ denotes the $\i$-category of {\em pointed} objects in
$\mathscr{C}$.  If $\mathscr{C}$ is an $\i$-category with finite
limits, then so is $\mathscr{C}_*$, and $\Stab(\mathscr{C})$ is
defined as the inverse limit of the tower
\[
\Stab(\mathscr{C})=\lim\{\cdots\overset{\Omega}{\longrightarrow}\mathscr{C}_*\overset{\Omega}{\longrightarrow}\mathscr{C}_*\}
\]
associated to the loops endomorphism
$\Omega\colon\mathscr{C}_*\to\mathscr{C}_*$ of $\mathscr{C}_*$.  In
other words, a spectrum object in an $\i$-category $\mathscr{C}$ (with
finite limits) is a sequence of pointed objects $A=\{A_0,A_1,\ldots\}$
together with equivalences $A_n\simeq\Omega A_{n+1}$ for each natural
number $n$.

Thus, our category of parametrized spectra is the stabilization
$\Stab(\mathrm{N}(\Set_\Delta/\Sing X)^\circ)$.  For our purposes, it
turns out to be much more convenient to use the presheaf model; there is
an equivalence of $\i$-categories
\[
\Stab(\mathrm{N}(\Set_\Delta/\Sing X)^\circ) \htp \Fun(\Sing
X^{\op},\spectra).
\]
Note that a functor $F\colon \Sing X\to\spectra$ associates to each
point $x$ of $X$ a spectrum $F_x$, to each path $x_0\to x_1$ in $X$ a
map of spectra (necessarily a homotopy equivalence) $F_{x_0}\to
F_{x_1}$, and so on for higher-dimensional simplices of $X$.

Given a presentable $\i$-category $\mathscr{C}$, the stabilization $\Stab(\mathscr{C})$ is itself presentable, and the functor $\Omega^\infty:\Stab(\mathscr{C})\to\mathscr{C}$ admits a left adjoint $\Sigma^\i:\mathscr{C}\to\Stab(\mathscr{C})$ \cite[Proposition 15.4]{DAGI}.
Just as $\Omega^\infty$ is natural in presentable $\i$-categories and {\em right} adjoint functors, dually, $\Sigma^\infty$ is natural in presentable $\i$-categories and {\em left} adjoint functors \cite[Corollary 15.5]{DAGI}.
In particular, given a map of spaces $f:Y\to X$, the adjoint pairs $(f_!,f^*)$ and $(f^*,f_*)$ defined above extend to the stabilizations, yielding a restriction functor
\[
f^*:\Fun(\Sing X^{\op},\spectra)\longrightarrow\Fun(\Sing Y^{\op},\spectra)
\]
which admits a left adjoint $f_!$ and a right adjoint $f^*$, again given by left and right Kan extension, respectively.

We can also formally stabilize suitable model categories, using
Hovey's work on spectra in general model categories \cite{Hovey}.
Specifically, given a left proper cellular model category
$\mathscr{C}$ and an endofunctor of $\mathscr{C}$, Hovey constructs a
cellular model category $\Sp^{\bN} \mathscr{C}$ of spectra.  When
$\mathscr{C}$ is additionally a simplicial symmetric monoidal
model category, the endofunctor given by the tensor with $S^{1}$
yields a simplicial symmetric monoidal model category of symmetric
spectra $\Sp^{\Sigma} \mathscr{C}$ (as well as a simplicial model
category $\Sp^{\bN} \mathscr{C}$ of prespectra).  These models of the
stabilization are functorial in left Quillen functors which are
suitably compatible with the respective endofunctors (see
\cite[5.2]{Hovey}).

In order to compare our model of parametrized spectra over $X$ to the
May-Sigurdsson model, we use the following consistency result.

\begin{Proposition}\label{prop:nstabcomm}
Let $\mathscr{C}$ be a left proper cellular simplicial model category
and write $\Sp^{\bN} \mathscr{C}$ for the cellular simplicial model
category of spectra generated by the tensor with $S^1$.  Then there is
an equivalence of $\i$-categories
\[
\mathrm{N}(\Sp^{\bN}
\mathscr{C})^\circ\simeq\Stab(\mathrm{N}\mathscr{C}^\circ).
\]
\end{Proposition}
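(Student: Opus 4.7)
My plan is to realize both $\i$-categories as the homotopy inverse limit of the tower $\cdots \xra{\Omega} \mathrm{N}\mathscr{C}_*^\circ \xra{\Omega} \mathrm{N}\mathscr{C}_*^\circ$. The right-hand side $\Stab(\mathrm{N}\mathscr{C}^\circ)$ is this limit by the definition of stabilization given in the excerpt, so the task reduces to identifying $\mathrm{N}(\Sp^{\bN}\mathscr{C})^\circ$ with the same limit.

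To do so, I would first unwind Hovey's construction: an object of $\Sp^{\bN}\mathscr{C}$ is a sequence $X = (X_0, X_1, \ldots)$ of pointed objects of $\mathscr{C}$ with structure maps $X_n \to \Omega X_{n+1}$, and after the stable Bousfield localization the fibrant objects are precisely $\Omega$-spectra, i.e., those $X$ for which each $X_n$ is fibrant and each structure map is a weak equivalence. Next I would construct a comparison functor
\[
\Phi\colon \mathrm{N}(\Sp^{\bN}\mathscr{C})^\circ \to \holim_n \mathrm{N}\mathscr{C}_*^\circ
\]
sending a cofibrant-fibrant $\Omega$-spectrum $X$ to the sequence $(X_n)$ equipped with the structure equivalences $X_n \heq \Omega X_{n+1}$ as coherence data; the required higher simplices come from the simplicial enrichment of $\mathscr{C}$. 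For fully faithfulness, Hovey's levelwise description of fibrations between $\Omega$-spectra forces $\map_{\Sp^{\bN}\mathscr{C}}(X,Y) \heq \holim_n \map_{\mathscr{C}_*}(X_n,Y_n)$ for cofibrant-fibrant inputs, which is precisely the mapping space in the $\i$-categorical limit.

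The main obstacle is essential surjectivity, which is a rectification statement: every homotopy-coherent tower $(X_n,\sigma_n)$ with $\sigma_n\colon X_n \heq \Omega X_{n+1}$ in $\mathrm{N}\mathscr{C}_*^\circ$ must be replaced by a strict $\Omega$-spectrum in $\Sp^{\bN}\mathscr{C}$. The cleanest way to handle this is to invoke the general principle that the nerve of cofibrant-fibrant objects of a simplicial model category of diagrams computes the corresponding $\i$-categorical diagram category; combined with the observation that the stable model structure on $\Sp^{\bN}\mathscr{C}$ is a Bousfield localization of an ordinary diagram model structure whose fibrant objects are towers of fibrant pointed objects, this yields the desired rectification. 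Modulo this foundational input (essentially \HTT{4.2.4.4}), the remainder of the argument is a direct unwinding of definitions.
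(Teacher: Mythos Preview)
Your overall strategy---construct a comparison functor from $\mathrm{N}(\Sp^{\bN}\mathscr{C})^\circ$ to the tower limit via the evaluation functors $\mathrm{Ev}_n$, then check fully faithfulness and essential surjectivity---is exactly what the paper does. The difference is in the allocation of effort, and here your proposal inverts the paper's emphasis.

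The paper treats essential surjectivity as \emph{evident} and spends its energy on fully faithfulness. For essential surjectivity, no appeal to \HTT{4.2.4.4} is needed: a vertex of the limit comes with actual objects $A_n\in\mathscr{C}_*^\circ$ and actual morphisms $A_n\to\Omega A_{n+1}$ (edges in the simplicial nerve are honest maps in the model category), so one may assemble these into a strict prespectrum and then cofibrant--fibrant replace in the stable model structure; since the input was already a levelwise $\Omega$-spectrum, the levels change only by weak equivalence, and the zig-zag of replacements furnishes an equivalence in the limit. So your ``main obstacle'' dissolves once you remember that edges in $\mathrm{N}\mathscr{C}_*^\circ$ are genuine morphisms.

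Conversely, your fully faithfulness argument is where the real gap lies. The assertion that ``Hovey's levelwise description of fibrations between $\Omega$-spectra forces $\map_{\Sp^{\bN}\mathscr{C}}(X,Y)\simeq\holim_n\map_{\mathscr{C}_*}(X_n,Y_n)$'' is not justified: the simplicial mapping space in $\Sp^{\bN}\mathscr{C}$ is a strict limit of the levelwise mapping spaces, and one must argue that this strict limit has the correct homotopy type, which is not automatic for a general cofibrant source. The paper instead reduces to the generating case $A=F_m X$ (a shifted suspension spectrum on a cofibrant $X\in\mathscr{C}_*$) using that every cofibrant object is a retract of a cellular one, and then invokes the adjunction $\map(F_m X,B)\simeq\map(X,B_m)$. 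For such $A$ the tower $\map(A_n,B_n)\simeq\map(\Sigma^{n-m}X,B_n)$ is homotopically constant above degree $m$, so its homotopy limit is visibly $\map(X,B_m)$. This cellular reduction is the substantive step you are missing.
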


\begin{proof}
The functors $\mathrm{Ev}_n\colon\Sp^{\bN}\mathscr{C} \to\mathscr C$,
which associate to a spectrum its $n^\mathrm{th}$-space $A_n$, induce
a functor (of $\i$-categories)
\[
f\colon\mathrm{N}(\Sp^{\bN}\mathscr{C})^\circ\to\lim\{\cdots\overset{\Omega}{\longrightarrow}\mathrm{N}\mathscr{C}^\circ_*\overset{\Omega}{\longrightarrow}
\mathrm{N}\mathscr{C}^\circ_*\}\simeq\Stab(\mathrm{N}\mathscr{C}^\circ)
\]
which is evidently essentially surjective.
To see that it is fully faithful, it suffices to check that for
cofibrant-fibrant spectrum objects $A$ and $B$ in $\Sp^{\bN}
\mathscr{C}$, there is an equivalence of mapping spaces
\[
\map(A,B)\simeq\holim\{\cdots\overset{\Omega}{\longrightarrow}\map(A_1,B_1)\overset{\Omega}{\longrightarrow}\map(A_0,B_0)\},
\]
where $\Omega\colon\map(A_{n+1},B_{n+1})\to\map(A_n,B_n)$ sends $A_{n+1}\to B_{n+1}$ to $A_n\simeq\Omega A_{n+1}\to\Omega B_{n+1}\simeq B_n$.
Since any cofibrant $A$ is a retract of a cellular object, inductively
we can reduce to the case in which $A=F_m X$, i.e., the shifted
suspension spectrum on a cofibrant object $X$ of $\mathscr{C}_*$.  Then
$\map(A,B)\simeq\map(X,B_m)$ by adjunction.  The latter is in turn
equivalent to $\map(\Sigma^{n-m} X, B_n)$, where we
interpret $\Sigma^{n-m} X=\ast$ for $m>n$, in which case the homotopy
limit is equivalent to that of the homotopically constant (above
degree $n$) tower whose $n^\mathrm{th}$ term is $\map(\Sigma^{n-m} X,
B_n)$.
\end{proof}

We now recall the May-Sigurdsson setup.  Given a space $X$, let
$(\mathscr{T}/X)_*$ denote the category of spaces over and under
$X$ ({\em ex-spaces}).  Although this category has a model structure
induced by the standard model structure on $\mathscr{T}/X$, one of the
key insights of May and Sigurdsson is that for the purposes of
parametrized homotopy theory it is essential to work with a variant
they call the $qf$-model structure \cite[6.2.6]{MR2271789}.  This
model structure is Quillen equivalent to the standard model structure
on ex-spaces \cite[6.2.7]{MR2271789}; however, its cofiber and fiber
sequences are compatible with classical notions of cofibration and
fibration (described in terms of extension and lifting properties).

May and Sigurdsson then construct a stable model structure on the
categories $\mathscr{S}_{X}$ of orthogonal spectra in $(\mathscr{T}/X)_*$
\cite[12.3.10]{MR2271789}.  This model structure is based on the
$qf$-model structure on ex-spaces, leveraging the diagrammatic
viewpoint of \cite{MR1806878, MandellMay}.  Similarly, they construct
a stable model structure on the category $\mathscr{P}_{X}$ of
prespectra in $(\mathscr{T}/X)_*$; the forgetful functor
$\mathscr{S}_{X} \to \mathscr{P}_{X}$ is a Quillen
equivalence \cite[12.3.10]{MR2271789}.

Using \cite[12.3.14]{MR2271789}, we see that after passing to
$\i$-categories the category $\mathscr{P}_X$ is in turn equivalent to
the category $\Sp^{\bN} (\mathscr{T}/X)_*$; the formal stabilization
of the $qf$-model structure on $(\mathscr{T}/X)_*$ with respect to the
fiberwise smash with $S^1$.  Using Proposition~\ref{prop:nstabcomm}
and the fact that the $qf$-model structure is Quillen equivalent to
the standard model structure, we obtain equivalences of
$\i$-categories
\begin{align*}
\mathrm{N}(\mathscr{S}_X)^{\circ} & \to \mathrm{N}(\mathscr{P}_X)^{\circ} \to
\mathrm{N}(\mathrm{Sp}^{\mathscr{\bN}}(\mathscr{T}/X)_*)^\circ \\
& \to \Stab (\mathrm{N}(\mathscr{T}/X)^\circ)\to\Stab(\mathrm{N}(\Set_\Delta/\Sing
X)^\circ)\to\Fun(\Sing X^{\op},\spectra).
\end{align*}

Thus we obtain the following comparison theorem.

\begin{Theorem}\label{thm:comp}
There is an equivalence of $\i$-categories between the simplicial
nerve of the May-Sigurdsson category of parametrized orthogonal
spectra $\mathrm{N}(\mathscr{S}_X)^{\circ}$ and the $\i$-category
$Fun(\Sing X^{\op}, \spectra)$ of presheaves on $X$ with values in
spectra.
\end{Theorem}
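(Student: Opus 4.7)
The plan is to construct the claimed equivalence as the composite of the chain of equivalences already written out in the paragraph immediately preceding the theorem; the substance of the proof lies in verifying that each arrow in that chain is an equivalence of $\i$-categories. The inputs are (a) several Quillen equivalences from \cite{MR2271789}, (b) Proposition~\ref{prop:nstabcomm} linking Hovey's stabilization to the $\i$-categorical stabilization, and (c) the straightening-unstraightening equivalence of \cite[\S 3.2.1]{math.CT/0608040}. The overall strategy is to translate each Quillen equivalence into an equivalence of $\i$-categories by applying the simplicial nerve to the full subcategory of cofibrant-fibrant objects, then paste these together with the intrinsically $\i$-categorical straightening equivalence.

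Step by step, the argument runs as follows. First, \cite[12.3.10]{MR2271789} provides a Quillen equivalence between orthogonal spectra and prespectra in $(\mathscr{T}/X)_*$, which gives $\mathrm{N}(\mathscr{S}_X)^{\circ} \htp \mathrm{N}(\mathscr{P}_X)^{\circ}$. Next, identify $\mathscr{P}_X$ with Hovey's stabilization $\mathrm{Sp}^{\bN}(\mathscr{T}/X)_*$ of the $qf$-model structure with respect to the fiberwise smash with $S^1$, using \cite[12.3.14]{MR2271789}, and then apply Proposition~\ref{prop:nstabcomm} to obtain $\mathrm{N}(\mathrm{Sp}^{\bN}(\mathscr{T}/X)_*)^\circ \htp \Stab(\mathrm{N}((\mathscr{T}/X)_*)^\circ)$. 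Stabilizing two further Quillen equivalences---the one between the $qf$- and standard model structures on $(\mathscr{T}/X)_*$ from \cite[6.2.7]{MR2271789}, and the one between pointed spaces over $X$ and pointed simplicial sets over $\Sing X$ induced by $|\cdot|$ and $\Sing$---carries us to $\Stab(\mathrm{N}(\Set_\Delta/\Sing X)^\circ)$. Finally, straightening identifies $\mathrm{N}(\Set_\Delta/\Sing X)^\circ$ with $\Fun(\Sing X^{\op}, \mathrm{N}\Set_\Delta^\circ)$; passing to pointed objects and stabilizing then yields $\Fun(\Sing X^{\op}, \spectra)$, since $\Stab$ commutes with the $\i$-categorical limit $\Fun(K,-)$.

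The main delicate point is verifying the hypotheses of Proposition~\ref{prop:nstabcomm} for $(\mathscr{T}/X)_*$ with the $qf$-model structure, namely left properness, cellularity, and the simplicial structure; these are exactly the properties that motivate the introduction of the $qf$-structure in \cite[Ch.~6]{MR2271789}. The secondary care points are the compatibility of stabilization with each Quillen equivalence, which reduces to checking that the left Quillen functor in question intertwines (up to weak equivalence) the two suspension endofunctors, and the fact that suspension-spectrum and evaluation functors behave coherently under base change---both of which are formal given the diagrammatic framework adopted in \cite{MR2271789}.
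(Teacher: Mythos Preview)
Your proposal is correct and follows essentially the same route as the paper: the theorem is deduced from the displayed chain of equivalences immediately preceding it, and each link is justified by exactly the ingredients you name (\cite[12.3.10, 12.3.14, 6.2.7]{MR2271789}, Proposition~\ref{prop:nstabcomm}, and the straightening equivalence). Your additional remarks about checking the hypotheses of Proposition~\ref{prop:nstabcomm} and about $\Stab$ commuting with $\Fun(K,-)$ simply make explicit what the paper leaves implicit.
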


Furthermore, the derived base-change functors we construct via the
stabilization of the presheaves agree with the derived base-change
functors constructed by May and Sigurdsson.  To see this, observe that
it suffices to check this for $f^*$; compatibility then follows
formally for the adjoints $f_*$ and $f_!$.  Moreover, since $f^*$ on
the categories of spectra is obtained as the suspension of $f^*$ on
spaces, we can reduce to checking that the right derived functor of
$f^* \colon (\mathscr{T}/X)_* \to (\mathscr{T}/Y)_*$ in the $qf$-model
structure is compatible with the right derived functor of $f^* \colon
\Fun(\mathfrak{C}(\Sing X^{\op}), \Set_\Delta) \to
\Fun(\mathfrak{C}(\Sing Y^{\op}), \Set_\Delta)$ in the projective
model structure.  By the work of \cite[\S 9.3]{MR2271789}, it suffices
to check the compatibility for $f^*$ in the $q$-model structure.
Since both versions of $f^*$ that arise here are Quillen right
adjoints, this amounts to the verification that the diagram
\[
\xymatrix{
\Fun(\mathfrak{C}(\Sing X^{\op}), \Set_\Delta) \ar[d]^{\Un} \ar[r]^-{f^*} &
\Fun(\mathfrak{C}(\Sing Y^{\op}), \Set_\Delta) \ar[d]^{\Un} \\ 
\Set_\Delta/X \ar[r]^-{f^*} & \Set_\Delta/Y  \\
}
\]
commutes when applied to fibrant objects, where here $\Un$ denotes the
unstraightening functor (which is the right adjoint of the Quillen
equivalence).  Finally, this follows from \HTT{2.2.1.1}.

\subsection{Bundles of $A$-modules and $A$-lines}
\label{sec:bundles-a-modules}

If $X$ is a space, let $\Sing X$ be its singular complex.  The work of
the previous section justifies the following definition.

\begin{Definition}\label{def-bundle-of-A-modules}
A \emph{bundle} or \emph{homotopy local system} of $A$-modules over
$X$ is a map of simplicial sets
\[
   f\colon \Sing X \to \Mod{A}.
\]
Similarly if $Y$ is any $\i$-groupoid, then a bundle of $A$-modules
over $Y$ is just a map of simplicial sets
\[
   f\colon Y \to \Mod{A}.
\]
\end{Definition}
Thus $f$ assigns
\begin{enumerate}\setcounter{enumi}{-1}
\item to each point $p\in X$ an $A$-module $f (p)$;
\item to each path $\gamma$ from $p$ to $q$ a map of
$A$-modules
\begin{equation}\label{eq:29}
     f (\gamma)\colon f (p) \to f (q);
\end{equation}
\item to each $2$-simplex $\sigma\colon \Delta^{2}\to X,$ say
\[
\xymatrix{
{p}
 \ar[d]_{\sigma_{01}}
 \ar[dr]^{\sigma_{02}}
\\
{q}
 \ar[r]_{\sigma_{12}}&
{r,}
}
\]
a path $f (\sigma)$ in $\Mod{A}(f (p), f (r))$ from $f (\sigma_{12})f
(\sigma_{01})$ to $f (\sigma_{02})$;
\end{enumerate}
and so forth.

Recall \HTT{1.2.7.3} that if $Y$ is a simplicial set and $\mathcal{C}$ is an
$\i$-category, then the simplicial mapping space
$\mathcal{C}^{Y}$ is the $\i$-category $\Fun(Y,\mathcal{C})$ of functors from $Y$ to $\mathcal{C}$.

\begin{Definition} \label{def-i-cat-bundle-A-modules}
The $\i$-category of bundles of
$A$-modules over $X$
is the simplicial mapping space
\[
\Mod{A}^{X} \eqdef \Fun (\Sing X,\Mod{A}).
\]
\end{Definition}


\begin{Remark}
We have not set up the framework necessary to work directly with the
bundle of $A$-modules associated to $f\colon \Sing X \to \Mod{A}$
(although see Theorem~\ref{thm:comp}).  Nonetheless, the notation of
bundle and pullback is compelling, and so we write $\UnivA$ for the
identity map
\[
  \Mod{A} \to \Mod{A},
\]
and if $f\colon \Sing X \to \Mod{A}$ is a map of $\i$-categories, then
we may write $f^{*}\UnivA$ as a synonym for $f$, when we want to
emphasize its bundle aspect.
\end{Remark}

Recall that $\Sing X$ is a Kan complex or $\i$-groupoid: it satisfies the
extension condition for all horns. Viewing an $\i$-category as a model
for a homotopy theory, an $\i$-groupoid models a homotopy theory  in
which all the morphisms are homotopy equivalences.

In particular, the map $f (\gamma)$ in \eqref{eq:29} is necessarily
an equivalence: the $A$-modules $f (p)$ will vary through weak
equivalences as $p$ varies over a path component of $X$.  We shall
be particularly interested in the case that these fibers are free
rank-one $A$-modules.

\begin{Definition}\label{def-line}
An \emph{$A$-line} is an $A$-module $L$ which admits a weak equivalence
\[
    L \xra{\heq} A.
\]
The $\i$-category $\Line{A}$ is the maximal $\i$-groupoid in $\Mod{A}$
generated by the $A$-lines.   We write $j$ for the inclusion
\[
    j\colon \Line{A} \to \Mod{A}
\]
and $\UnivL \eqdef j^{*}\UnivA$ for the tautological
bundle of $A$-lines over $\Line{A}.$
\end{Definition}

By construction $\Line{A}$ is a Kan complex, and we regard it
as the classifying space for bundles of $A$-lines.  If
$X$ is a space, then a map
\[
   f\colon \Sing X \to \Line{A}.
\]
assigns
\begin{enumerate}\setcounter{enumi}{-1}
\item to each point $p\in X$ an $A$-line $f (p)$;
\item to each path $\gamma$ from $p$ to $q$ an equivalence map of
$A$-lines
\[
     f (\gamma)\colon f (p) \heq f (q);
\]
\item to each $2$-simplex $\sigma\colon \Delta^{2}\to X,$ say
\[
\xymatrix{
{p}
 \ar[d]_{\sigma_{01}}
 \ar[dr]^{\sigma_{02}}
\\
{q}
 \ar[r]_{\sigma_{12}}&
{r,}
}
\]
a path $f (\sigma)$ in $\Line{A}(f (p), f (r))$ from $f (\sigma_{12})f
(\sigma_{01})$ to $f (\sigma_{02})$;
\end{enumerate}
and so forth.

\begin{Definition}
The simplicial mapping space $\Line{A}^{X} = \Fun (\Sing X,\Line{A})$
is an $\i$-category (in fact, a Kan complex); we call it the the
$\i$-category or space of \emph{$A$-lines over $X.$}
\end{Definition}

We develop twisted $A$-theory starting from
$\Line{A}$ in \S\ref{sec:twist-gener-cohom}.  Before doing so, we
briefly discuss other aspects of the $\i$-category $\Line{A}$.

\subsection{$\Line{A}$ and $\GL{A}$}

\label{sec:line_a-gla}

By construction, $\Line{A}$ is connected, and so
equivalent to the maximal $\i$-groupoid $B\Aut (A)$ on the single
$A$-module $A$.  As we discuss in \cite[\S6]{0810.4535v3}, it is an
important point
that the space of morphisms $\Aut (A)= \Line{A} (A,A)$ is not
a group, or even a monoid, but instead merely a group-like $A_{\infty}$
space.

Nevertheless, $\Line{A}$ is not only the classifying space
for bundles of $A$-lines, but it is a delooping of $\Aut (A).$  To see
this, let $\Triv{A}$ be the $\i$-category of $A$-lines $L$, equipped with
an equivalence $L \xra{\heq} A.$  Then
\cite[Prop. 7.38]{0810.4535v3} $\Triv{A}$ is
contractible, and the map
\[
    \Triv{A} \to \Line{A}
\]
is a Kan fibration, with fiber $\Aut (A)$.

Classical infinite loop space theory provides another model for homotopy type
$\Aut (A).$  Namely, let $A$ be an $\ainfty$ ring spectrum in the
sense of \cite{LMS:esht}: so $\pi_{0}\linf A$ is a ring.  Let $\GL{A}$
be the pull-back in the diagram
\[
\begin{CD}
\GL{A} @>>> \linf A \\
@VVV @VVV \\
(\pi_{0}\linf A)^{\times} @>>> \pi_{0}\linf A.
\end{CD}
\]
Then $\GL{A}$ is a group-like $\ainfty$ space: $\pi_{0}\GL{A}$ is a
group.  We show that
\[
    \GL{A} \heq |\Aut (A)|.
\]
Since the geometric realization of a Kan fibration is a Serre
fibration \cite{MR0238322}, the fibration
\[
     \Aut (A) \to \Triv{A} \to \Line{A}
\]
gives rise to a fibration
\begin{equation}\label{eq:30}
    \GL{A} \heq |\Aut (A)| \to |\Triv{A}| \heq \ptspace  \to |\Line{A}|.
\end{equation}
Thus $|\Line{A}|$ provides a model for the delooping
$B\GL{A}$.  It has the virtue that we have already given a precise
description of the vertices of the simplicial mapping space
\[
   \Line{A}^{X} = \map (\Sing X,\Line{A}) \heq \map (X,|\Line{A}|).
\]

\begin{Example}
If $S$ is the sphere spectrum, then $\linf S$ is the space $QS^{0} =
\linf \sinf S^{0}$, and
\[
\GL{S} = Q_{\pm 1}S^{0},
\]
i.e., the unit components.  The space $B\GL{S}\heq |\Line{S}|$ is the
classifying space for stable spherical fibrations of virtual rank $0$.
It follows the space of $S$-lines over $X$ is homotopy equivalent to
the space of spherical fibration of virtual rank $0$.
\end{Example}

\begin{Example}
The classical $J$-homomorphism is a map
\[
   J\colon O \to \GL{S},
\]
which deloops to give a map
\[
   BJ\colon BO \to B\GL{S}.
\]
One sees that this is the map which takes a virtual vector bundle
of rank $0$ to its associated stable spherical fibration; we may
regard this as associating to
a vector bundle its bundle of $S$-lines.
\end{Example}

\begin{Example}\label{ex-8}
If $A$ is an $S$-algebra, then the unit of $A$ induces a map
\[
   B\GL{S} \to B\GL{A}.
\]
In our setting, this map arises from the map
of $\i$-categories
\[
  \Mod{S} \to \Mod{A}
\]
given by $M \mapsto M\otimes_{S} A = M\Smash_{S}A$, which restricts to
give a map of $\i$-categories
\[
   \Line{S} \to \Line{A}.
\]
\end{Example}

\begin{Example}[\cite{MQRT:ersers}]\label{ex-1}
Let $H\Z$ be the integral Eilenberg-MacLane spectrum.  Then $\linf
H\Z\heq K (\Z,0)\heq \Z$, and so $\GL{H\Z} \heq \{\pm 1\}\heq \Z/2$, and
$B\GL{H\Z}\heq B\Z/2\heq K (\Z/2,1).$
\end{Example}

\begin{Remark}
If $A=K$, the spectrum representing complex $K$-theory, then $\Aut (K)$
has the homotopy type of the space of $K$-module equivalences $K\to
K.$    Atiyah and Segal \cite{MR2172633} build twists of $K$-theory
from $PU$-bundles.  They remark that one can more generally build
twists of $K$-theory from $\mathcal{G}$-bundles, where $\mathcal{G}$
is the group of strict $K$-module automorphisms of $K$-theory.  Our
space $\Aut (K)$ generalizes this idea.
\end{Remark}

\begin{Remark}
As we explain in \cite[\S6]{0810.4535v3}, for
many algebras $A$ (including the sphere $S$), the group $\Aut_{strict}
(A)$ of strict $A$-module automorphisms of $A$ cannot provide a
sufficiently rich theory of bundles of $A$-modules.  For example,
Lewis's Theorem \cite{Lewis} implies that there is no model for the
sphere spectrum $S$ such that the classifying space $B\Aut_{strict}
(S)$ classifies stable spherical fibrations.  (See also \cite[\S
  22.2]{MR2271789} for discussion of this issue.)
\end{Remark}


\section{The generalized Thom spectrum}
\label{sec:gener-thom-spectr}

Let $A$ be an $S$-algebra, let $X$ be a space, and let $f$ be a bundle
of $A$-lines over $X$, that is, a map of simplicial sets
\[
   f\colon \Sing X \to \Line{A}.
\]

Although the $\i$-category $\Line{A}$ is not cocomplete (it doesn't
even have sums), the $\i$-category $\Mod{A}$ of $A$-modules is
complete and cocomplete.    This allows us in \cite{0810.4535v3} to
make the following definition.

\begin{Definition}\label{def-Thom-spectrum}
The \emph{Thom spectrum} of $f$ is the colimit
\[
   X^{f} \eqdef \colim \left( \Sing X \xra{f} \Line{A} \xra{j} \Mod{A} \right).
\]
Equivalently, $X^{f}$ is the left Kan extension $L_{\pi}jf$ in the
diagram
\[
\xymatrix{
{X}
  \ar[r]^-{f}
  \ar[d]_-{\pi}
&
{\Line{A}}
 \ar[r]^-{j}
&
{\Mod{A}.}
\\
{\ptspace}
 \ar@{-->}[urr]_-{L_{p} (fj)}
}
\]
\end{Definition}

The colimit and left Kan extension here are $\i$-categorical
colimits: they are generalizations of the notion of homotopy colimit
and homotopy left Kan extension.  It is an important achievement of
$\i$-category theory to give a  sensible definition of these colimits.

Let $A_{X}\colon X \to \ptspace \to \Line{A}$ be the map which picks out $A$,
considered as the constant $A$-line over $X$.  The colimit means that we have
an equivalence of mapping spaces
\[
   \Mod{A} (X^{f},A) \heq \Mod{A}^{X} (f^{*}j^{*}\UnivA,A_{X}).
\]
Notice also that we have a natural inclusion
\begin{equation}\label{eq:31}
   \Line{A}^{X} (f^{*}\UnivL,A_{X}) \to \Mod{A}^{X} (f^{*}j^{*}\UnivA,A_{X}):
\end{equation}
a map of bundles of $A$-modules $f^{*}\UnivL \to A_{X}$ is a map of
bundles of $A$-lines
if it is an equivalence over every point of $X$, and one checks that
the inclusion \eqref{eq:31} is the inclusion of a set of path
components.

\begin{Definition}
The space of \emph{orientations} of $X^{f}$ is the pull-back in the
diagram
\begin{equation}
\begin{CD}
\orient (X^{f},A) @>>> \Mod{A}(X^{f},A) \\
@V\heq VV @VV \heq V \\
\Line{A}^{X} (f^{*}\UnivL,A_{X}) @>>> \Mod{A}^{X}(f^{*}j^{*}\UnivA, A_{X}).
\end{CD}
\end{equation}
That is, the space of orientations $\orient (X^{f},A)$ is the subspace
of $A$-module maps $X^{f} \to A$ which correspond, under the equivalence
\[
   \Mod{A} (X^{f},A) \heq \Mod{A}^{X} (f^{*}j^{*}\UnivA, A_{X}),
\]
to fiberwise equivalences $f^{*}\UnivL \to A_{X}.$
\end{Definition}

This appealing notion of orientation expresses
orientations as fiberwise equivalences of bundles of spectra.  The
following results from \cite{0810.4535v3} explain how our Thom spectra and
orientations generalize the classical notions.

\begin{Theorem}\label{t-th-i-cat-thom-is-thom}
Suppose that
\[
\xi\colon \Sing X \to \Line{S}
\]
corresponds to a map
\[
g\colon X \to B\GL{S}.
\]
Then $X^{\xi}$ is equivalent to the
classical Thom spectrum $X^{g}$ of the spherical fibration classified
by $g$.  It follows (see  Example \ref{ex-8}) that if $f$ is the composition
\[
   f\colon \Sing X \xra{\xi} \Line{S} \to \Line{A},
\]
then $X^{f} \heq X^{\xi}\Smash_{S} A \heq X^{g}\Smash_{S} A$ is
equivalent to the classical Thom spectrum tensored with $A.$
\end{Theorem}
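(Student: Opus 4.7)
The plan is to identify both Thom spectra $X^{\xi}$ and $X^g$ with the same homotopy colimit of parametrized spectra, using the comparison with May-Sigurdsson parametrized spectra established in Theorem~\ref{thm:comp}.

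First I would make precise the correspondence between $\xi$ and $g$. From \eqref{eq:30} we have $|\Line{S}| \heq B\GL{S}$, and the tautological bundle of $S$-lines $\UnivL \to \Line{S}$ is universal among bundles of $S$-lines; via the comparison of Section~\ref{s:param} it corresponds to the universal stable spherical fibration of virtual rank zero over $B\GL{S}$. Consequently the parametrized $S$-module spectrum over $X$ classified by $\xi$, viewed through Theorem~\ref{thm:comp} as an object of $\Fun(\Sing X^{\op},\spectra)$, agrees with the pullback along $g$ of the universal spherical fibration.

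Next I would identify both $X^{\xi}$ and $X^{g}$ with the pushforward of this parametrized spectrum along $\pi\colon X \to \ptspace$. For the $\i$-categorical side this is immediate from Definition~\ref{def-Thom-spectrum}: the colimit $\colim(j\xi)$ is by definition the left Kan extension $L_{\pi}(j\xi)$, i.e.\ the pushforward $\pi_{!}$ in $\Fun(-,\Mod{S})$. For the classical side, the Thom spectrum of a stable spherical fibration as in \cite{LMS:esht} is realized as the pushforward to a point in the May-Sigurdsson category of parametrized orthogonal spectra, as treated in \cite[\S 22]{MR2271789}. Invoking Theorem~\ref{thm:comp} together with the compatibility of derived base-change functors noted immediately after it, the two pushforwards are identified, yielding $X^{\xi} \heq X^{g}$.

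For the assertion about $X^{f}$, the composition $\Line{S} \to \Line{A} \to \Mod{A}$ factors through the extension-of-scalars functor $\slot \Smash_{S} A\colon \Mod{S} \to \Mod{A}$. Since this is a left adjoint (to the forgetful functor) it preserves $\i$-categorical colimits, so
\[
X^{f} \heq \colim\bigl(\Sing X \xra{\xi} \Line{S} \xra{j} \Mod{S}\bigr) \Smash_{S} A \heq X^{\xi} \Smash_{S} A \heq X^{g} \Smash_{S} A,
\]
which proves the final statement. The main obstacle is the identification of the classical Thom spectrum with the $\i$-categorical pushforward; once Theorem~\ref{thm:comp} and the associated compatibility of base-change are in hand, the remaining work is routine manipulation with left adjoints and Kan extensions.
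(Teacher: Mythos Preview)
Your argument is sound, and the treatment of the second assertion (that $X^{f}\heq X^{\xi}\Smash_{S}A$ via the left adjoint $\slot\Smash_{S}A$) matches exactly what the paper intends in invoking Example~\ref{ex-8}.  For the main identification $X^{\xi}\heq X^{g}$, however, the paper does not argue as you do.  The paper defers the proof to \cite{0810.4535v3}, and the sketch it offers later in Remark~\ref{rem-2} proceeds through the torsor description of Proposition~\ref{t-pr-thom-spectrum-tensor-prod}: one writes
\[
X^{\xi}\;\heq\;\splus P(\xi)\Smash_{\splus\GL{S}} S\;\heq\;B\bigl(\splus P(\xi),\,\splus\GL{S},\,S\bigr),
\]
and then observes that the classical constructions of \cite{MQRT:ersers,LMS:esht} are precisely careful point-set models for this two-sided bar construction.

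Your route instead passes through Theorem~\ref{thm:comp} and the May--Sigurdsson framework: you identify both spectra as $\pi_{!}$ of the same parametrized spectrum, using the compatibility of base-change under the equivalence $\mathrm{N}(\mathscr{S}_{X})^{\circ}\heq\Fun(\Sing X^{\op},\spectra)$.  This is a legitimate alternative.  Its advantage is that it stays within the language of parametrized homotopy theory and avoids the explicit bar-construction bookkeeping; the trade-off is that you must verify that the object of $\mathscr{S}_{X}$ corresponding to $j\xi$ under Theorem~\ref{thm:comp} really is the parametrized sphere spectrum whose $r_{!}$ is the LMS Thom spectrum (this is in \cite[\S23]{MR2271789}, but it is not entirely free).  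The paper's bar-construction approach, by contrast, makes direct contact with the original constructions of \cite{MQRT:ersers,LMS:esht} without passing through \cite{MR2271789}, at the cost of the point-set care flagged in Remark~\ref{rem-2}.
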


We can then study the space of orientations of $X^{f}$ via the equivalences
\begin{equation} \label{eq:32}
    \Mod{A} (X^{f},A) \heq \Mod{A} (X^{g}\Smash_{S} A,A) \heq \Mod{S} (X^{g},A),
\end{equation}
and we find that

\begin{Proposition}
A map $\alpha\colon X^{f} \to A \in \Mod{A} (X^{f},A)$ is in $\orient (X^{f},A)$
if and only if it corresponds to an orientation $\beta\colon X^{g}\to A$ of the
classical Thom spectrum, that is, if and only if
\[
    (\ptit{X} \xra{z} A) \mapsto (X^{g} \xra{\Delta} \ptit{X}\Smash X^{g}
    \xra{z \Smash \beta} A \Smash A \to A)
\]
induces an isomorphism
\[
   A^{*} (\ptit{X}) \iso A^{*} (X^{g}).
\]
\end{Proposition}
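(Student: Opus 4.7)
The plan is to combine the definition of $\orient(X^{f},A)$ as a fiberwise-equivalence condition with the classical fact that, for a spherical fibration, a Thom class is detected on fibers. I split the argument into a pointwise translation and a global detection step.

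First, I would apply Theorem~\ref{t-th-i-cat-thom-is-thom} and the equivalence~\eqref{eq:32} so that $\alpha \in \Mod{A}(X^{f},A)$ is identified with the $S$-module map $\beta \in \Mod{S}(X^{g},A)$ adjoint to it, namely $\beta\colon X^{g} \to A$. Under this identification, the problem is to show that $\alpha$ corresponds to a fiberwise equivalence of bundles of $A$-lines $f^{*}\UnivL \to A_{X}$ if and only if $\beta$ satisfies the classical Thom-isomorphism condition.

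Next, I would unpack the fiberwise condition pointwise. Since $f$ is the composite $\Sing X \xra{\xi} \Line{S} \to \Line{A}$, for each $p\in X$ the fiber $f(p)$ is the $A$-line $\xi(p)\Smash_{S}A$, where $\xi(p)$ is an $S$-line. Under the adjunction between extension and restriction of scalars along $S\to A$, the restriction $\alpha_{p}\colon \xi(p)\Smash_{S}A \to A$ corresponds to the restriction $\beta_{p}\colon \xi(p) \to A$ of $\beta$ to the fiber. The map $\alpha_{p}$ is an equivalence of $A$-lines exactly when $\beta_{p}$ is an $S$-module map from an $S$-line to $A$ whose effect on $\pi_{0}$ is multiplication by a unit of $\pi_{0}A$; equivalently, $\beta_{p}$ represents an $A$-cohomology generator of the fiber sphere. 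Thus the fiberwise orientation condition on $\alpha$ is exactly the familiar Thom-class condition on $\beta$: its restriction to every fiber sphere of the spherical fibration classified by $g$ is a unit multiple of the canonical generator.

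The remaining step is to identify this pointwise Thom-class condition with the global assertion that the map
\[
(\ptit{X} \xra{z} A) \mapsto \bigl(X^{g} \xra{\Delta} \ptit{X}\Smash X^{g} \xra{z\Smash \beta} A\Smash A \to A\bigr)
\]
induces an isomorphism $A^{*}(\ptit{X}) \iso A^{*}(X^{g})$. This is the classical Thom isomorphism theorem for spherical fibrations: for a spherical fibration over a space, a cohomology class is a Thom class, in the sense that cap/cup product with it induces the Thom isomorphism, if and only if its restriction to each fiber generates the reduced cohomology of that fiber as a module over $\pi_{0}A$. The proof is the usual one: reduce by cell induction on $X$ to the case of a trivial fibration over a disc, where the condition is literally the pointwise one; alternatively, argue via the Atiyah--Hirzebruch / Serre spectral sequence, where the edge condition is precisely fiberwise generation.

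The main obstacle, and really the only nonroutine content, is verifying that the adjoint translation in the second step actually matches the $\infty$-categorical notion of fiberwise equivalence from Definition~\ref{def-i-cat-bundle-A-modules} with the naive pointwise condition on $\beta_{p}$; once that is in hand, the classical Thom isomorphism theorem takes care of passing between pointwise and global. I would handle this by noting that extension of scalars $\slot \Smash_{S} A\colon \Line{S}\to \Line{A}$ is a map of $\infty$-groupoids, so it sends (pointwise) equivalences to (pointwise) equivalences and reflects them up to the unit component of $\pi_{0}A$, which is exactly the ambiguity absorbed by the classical notion of orientation.
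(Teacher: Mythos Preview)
Your argument is correct. Note, however, that the paper does not actually supply a proof of this proposition: it simply records the equivalences~\eqref{eq:32} and then states the result with ``and we find that.'' Your proposal is therefore a correct and careful unpacking of what the paper treats as immediate --- you start from the same adjunction~\eqref{eq:32}, translate the fiberwise-equivalence condition defining $\orient(X^{f},A)$ into the classical fiberwise Thom-class condition on $\beta$, and then invoke the classical Thom isomorphism theorem to pass from the pointwise condition to the global isomorphism statement. Nothing in your argument goes beyond or conflicts with the paper's implicit reasoning.
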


Our theory leads to an obstruction theory for orientations.  Let
$\map_{f} (\Sing X,\Triv{A})$ be the simplicial set which is the
pull-back in the diagram
\[
\begin{CD}
\map_{f} (\Sing X,\Triv{A}) @>>> \map (\Sing X,\Triv{A}) \\
@VVV @VVV \\
\{f \} @>>> \map (\Sing X,\Line{A}).
\end{CD}
\]
That is, $\map_{f} (\Sing X,\Triv{A})$ is the mapping simplicial set of
lifts in the diagram
\begin{equation} \label{inf-ii-eq:26}
\xymatrix{
&
{\Triv{A}}
 \ar[d] \\
{\Sing X}
 \ar@{-->}[ur]
 \ar[r]_-{f}
&
{\Line{A}.}
}
\end{equation}

The obstruction theory for orientations of the bundle of $A$-modules
is given by the following.

\begin{Theorem} \label{t-th-inf-ii-or-thy-infty-lifting}
Let $f\colon \Sing X \to \Line{A}$ be a bundle of $A$-lines over $X$, and
let $X^{f}$ be the associated $A$-module Thom
spectrum.   Then there is an equivalence
\[
   \map_{f} (\Sing X,\Triv{A}) \heq
   \Line{A}^{X} (f,\iota) \heq \orient (X^{f},A).
\]
In particular, the bundle $f^{*}\UnivL$ admits an orientation if and
only if $f$ is null-homotopic.
\end{Theorem}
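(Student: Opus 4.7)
The plan is to chain the two claimed equivalences: first identify $\map_{f}(\Sing X, \Triv{A})$ with $\Line{A}^{X}(f, \iota)$, then identify the latter with $\orient(X^{f}, A)$ via the universal property of the Thom spectrum colimit. The ``in particular'' clause is then a direct consequence of the contractibility of $\Triv{A}$ and the fact that $\Triv{A}\to\Line{A}$ is a Kan fibration.

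For the first step, I would exploit that $\Triv{A}$ is, up to equivalence, the slice $\infty$-category $\Line{A}_{/A}$, whose vertices are $A$-lines $L$ equipped with a chosen equivalence $L \heq A$. The forgetful map $\Triv{A} \to \Line{A}$ is then a right fibration classifying the representable functor $\Line{A}(-, A)$. By straightening-unstraightening in quasicategory theory, or directly from the universal property of the slice, lifts of $f\colon \Sing X \to \Line{A}$ through this fibration correspond bijectively (in fact, up to an equivalence of mapping spaces) to natural transformations from $f$ to the constant diagram $\iota$, yielding
\[
\map_{f}(\Sing X, \Triv{A}) \heq \Fun(\Sing X, \Line{A})(f, \iota) = \Line{A}^{X}(f, \iota).
\]

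For the second step, Definition~\ref{def-Thom-spectrum} presents $X^{f}$ as $\colim(jf)$, so the defining universal property of the colimit yields $\Mod{A}(X^{f}, A) \heq \Mod{A}^{X}(f^{*}j^{*}\UnivA, A_{X})$. The pullback square defining $\orient(X^{f}, A)$ has this equivalence on one side and, on the other, the inclusion $\Line{A}^{X}(f^{*}\UnivL, A_{X}) \hookrightarrow \Mod{A}^{X}(f^{*}j^{*}\UnivA, A_{X})$; since the latter is observed immediately after \eqref{eq:31} to be the inclusion of a union of path components, the pullback is equivalent to its bottom-left corner, namely $\Line{A}^{X}(f^{*}\UnivL, A_{X}) = \Line{A}^{X}(f, \iota)$.

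The ``in particular'' statement then follows at once: an orientation exists iff the chain of equivalences delivers a nonempty space, iff a lift exists in diagram \eqref{inf-ii-eq:26}. Since $\Triv{A}$ is contractible and $\Triv{A} \to \Line{A}$ is a Kan fibration, such a lift of $f$ exists precisely when $f$ is null-homotopic (a lift composed with a contraction of $\Triv{A}$ gives a null-homotopy of $f$; conversely, any null-homotopy lifts through the Kan fibration by the right lifting property against $\Sing X \times \{0\} \hookrightarrow \Sing X \times \Delta^{1}$). I expect the main technical hurdle to be the first step, which requires care in identifying $\Triv{A}$ with the appropriate slice construction and invoking the right form of straightening-unstraightening so that the correspondence between lifts and natural transformations is manifestly an equivalence of mapping spaces rather than merely a bijection on components.
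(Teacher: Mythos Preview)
The paper does not supply a proof of this theorem here; it is one of the results imported from the companion paper \cite{0810.4535v3}. Your argument is correct and is essentially the expected one. The first equivalence is the standard identification of lifts along the overcategory fibration $\Triv{A}\simeq\Line{A}_{/A}\to\Line{A}$ with morphisms into the constant diagram in $\Fun(\Sing X,\Line{A})$; since $\Line{A}$ is an $\i$-groupoid, every morphism in $\Line{A}$ is an equivalence, so the slice $\Line{A}_{/A}$ really does model $\Triv{A}$, and the right fibration it determines is classified by $L\mapsto\Line{A}(L,A)$, whose space of sections over $f$ is $\Line{A}^{X}(f,\iota)$. The second equivalence follows immediately from the defining pullback square for $\orient(X^{f},A)$: the right vertical map is an equivalence, hence so is the left, giving $\orient(X^{f},A)\heq\Line{A}^{X}(f^{*}\UnivL,A_{X})=\Line{A}^{X}(f,\iota)$. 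Your treatment of the ``in particular'' clause via the contractibility of $\Triv{A}$ and the Kan fibration property is also correct. Your instinct that the only genuine technical content lies in the first step, and that it is handled by straightening/unstraightening, matches how the argument is carried out in \cite{0810.4535v3}.
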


\begin{Example}
This theorem recovers and slightly generalizes the obstruction theory of
\cite{MQRT:ersers} (which treats the case that $A$ is a $\einfty$
ring spectrum, that is, a commutative $S$-algebra).
Let $g\colon X\to B\GL{S}$ be a stable spherical fibration.  Then $g$
admits a Thom isomorphism in $A$-theory if and only if the composition
\[
     X \xra{g} B\GL{S} \heq |\Line{S}| \to |\Line{A}| \heq B\GL{A}
\]
is null.
\end{Example}

\begin{Example}
This example appears in \cite{MQRT:ersers}.  Let $H\Z$ be the
integral Eilenberg-MacLane spectrum.  From Example \ref{ex-1} we have
$B\GL{H\Z}\heq K (\Z/2,1).$  The obstruction to orienting a vector
bundle $V/X$ in singular cohomology is the map
\[
   X \xra{V} BO \xra{BJ} B\GL{S} \xra{} B\GL{H\Z} \heq K (\Z/2,1);
\]
this is just the first Stiefel-Whitney class.
\end{Example}


\section{Twisted generalized cohomology}
\label{sec:twist-gener-cohom}

Now we consider twisted generalized cohomology in the language of
sections~\ref{sec:bundl-module-spectra} and
\ref{sec:gener-thom-spectr}.  Let $A$ be an $S$-algebra, and let
\[
     f\colon \Sing X\to \Line{A}
\]
or, equivalently, $f\colon X \to B\GL{A}$ (see \S\ref{sec:line_a-gla})
classify a bundle of $A$-lines over $X.$  As in Definition
\ref{def-Thom-spectrum}, let
\[
    X^{f} = \colim \left( \Sing X \xra{f} \Line{A} \xra{j} \Mod{A}\right)
\]
be the indicated $A$-module.  We think of $X^{f}$ as the $f$-twisted
cohomology object associated to the bundle $f$, and we make the following

\begin{Definition} \label{def-f-twisted-A-homology}
The \emph{$f$-twisted $A$ homology and cohomology groups} of $X$
are
\begin{align*}
    A^{n} (X)_{f} &\eqdef \pi_{0}\Mod{A} (X^{f}, \Sigma^{n}A) \\
    A_{n} (X)_{f} & \eqdef \pi_{0}\Mod{A} (\Sigma^{n}A, X^{f}).
\end{align*}
Equivalently, we have
\begin{align*}
    A^{n} (X)_{f} & = \pi_{-n}F_{A} (X^{f},A) \\
    A_{n} (X)_{f} & = \pi_{n}F_{A} (A,X^{f}) \iso \pi_{n}X^{f}.
\end{align*}
\end{Definition}

Here if $V$ and $W$ are $A$-modules, then  $F_{A} (V,W)$ is the
function spectrum of $A$-module maps from $V$ to $W$: it is a spectrum
such that
\[
     \linf F_{A} (V,W) \heq \Mod{A} (V,W).
\]
Thus for $n\geq 0$,
\[
    \pi_{n} F_{A} (V,W) \iso \pi_{n}\Mod{A} (V,W) \iso \Mod{A}
    (\Sigma^{n}V,W) \iso \Mod{A} (V,\Sigma^{-n}W).
\]

\begin{Example}\label{ex-9}
Suppose that $V$ is a vector bundle over $X$.  Then we can form the
map
\[
j (V)\colon X \xra{V} BO \xra{BJ} B\GL{S}  \xra{} B\GL{A}.
\]
and also the twisted cohomology
\[
    A^{*} (X)_{j (V)} = \pi_{0}\Mod{A} (X^{j (V)},\Sigma^{*}A).
\]
Since by Theorem \ref{t-th-i-cat-thom-is-thom}
\[
    X^{j (V)} \heq X^{V}\Smash A,
\]
we have
\begin{equation}\label{eq:28}
   A^{*} (X)_{j (V)} \iso \pi_{0} \Mod{S} (X^{V},\Sigma^{*}A) = A^{*} (X^{V}),
\end{equation}
so in this case the twisted cohomology is just the cohomology of the
Thom spectrum.
\end{Example}

The definition is not quite a direct generalization of that Atiyah and
Segal.  Let $S_{X}\colon \Sing X \to \Mod{S}$ be the constant functor which
attaches to each point of $X$ the sphere spectrum $S$.
Theorem~\ref{thm:comp} and the work of \cite[\S 22]{MR2271789} allow
us to describe their construction as attaching to the bundle of
$A$-lines $f^{*}\UnivL$ over $X$ the group
\[
    A (X)_{f,AS} = \pi_{0}\Gamma (f/X)\iso
  \pi_{0}\Mod{S}^{X} (S_{X},f^{*}\UnivL).
\]
To compare this definition to ours, we must assume (as is the case for
$K$-theory, for example) that $A$ is a \emph{commutative} $S$-algebra.
In that case, if $L$ is an $A$-module, then the dual spectrum
\[
L^{\vee} \eqdef F_{A} (L,A)
\]
is again an $A$-module.  As in the case of classical commutative
rings, the operation $L\mapsto L^{\vee}$ defines an involution
\[
   \Line{A} \xra{(\slot)^{\vee}} \Line{A}
\]
on the $\i$-category of $A$-lines, such that
\[
    L^{\vee}\Smash_{A} M \heq F_{A} (L,M);
\]
in particular
\[
    \Mod{S} (S,L^{\vee}) \heq \Mod{A} (A,L^{\vee}) \heq \Mod{A} (L,A).
\]

\begin{Remark}
As we have written it, we have an evident functor $\Line{A}\to
(\Line{A})^{\text{op}}$. As $\Line{A}$  is an $\i$-groupoid, it admits
an equivalence $\Line{A}\heq \Line{A}^{\text{op}}$.
\end{Remark}

This is an $\i$-categorical approach to the following: if $A$ is a commutative
$S$-algebra then $\GL{A}$ is a sort of commutative group.  More
precisely, it is a commutative group-like monoid in the $\i$-category
of spaces, or equivalently it is a group-like $\einfty$-space.  As
such it has an involution
\[
-1\colon\GL{A}\to \GL{A}
\]
which deloops to a map
\[
   B(-1)\colon B\GL{A} \to B\GL{A}.
\]

In any case, given a map
\[
    f\colon \Sing X \to  \Line{A},
\]
classifying the bundle $f^{*}\L$, we may form the map
\[
    -f = f^{\vee}\colon \Sing X \xra{f} \Line{A} \xra{(\slot)^{\vee}} \Line{A}
\]
so that $(-f)^{*}\L$ is the fiberwise dual of $f^{*}\L$,
and then one has
\begin{align*}
     \Gamma ((-f)^{*}\L) &= \Mod{S}^{X} (S_{X},(-f)^{*}\L) \\
                         &  \heq  \Mod{A}^{X} (A_{X},(-f)^{*}\L) \\
                         & \heq    \Mod{A}^{X} (f^{*}\L,A) \\
                         &  \heq \Mod{A} (X^{f},A).
\end{align*}
That is, the cohomology object we associate to $f\colon X\to B\GL{A}$ is the
one which Atiyah and Segal associate to $-f\colon X \to B\GL{A}$,
\[
    A^{*} (X)_{f} \iso A^{*} (X)_{-f,AS}.
\]

Of course Atiyah and Segal also explain how to construct a $K$-line
from a $PU$-bundle over $X$: in our language, they construct a map
\[
   BPU \heq K (\Z,3) \to B\GL{K}.
\]
From our point of view the existence of this map can be phrased as a
question about the $\Spinc$ orientation of complex $K$-theory.


To see this, recall that Atiyah, Bott, and Shapiro \cite{ABS:CM}
produce a Thom isomorphism  in complex $K$-theory for
$\Spinc$-bundles.  According to Theorem
\ref{t-th-inf-ii-or-thy-infty-lifting}, this corresponds to
the arrow labeled $ABS$ in the diagram
\[
\xymatrix{
{K (\Z,2)}
 \ar[rr]
 \ar[d]
& &
{\GL{K}}
 \ar[d]
\\
{B\Spinc}
 \ar[rr]^-{ABS}
 \ar[d]
& &
{\Triv{K}\heq \ptspace }
 \ar[d]
 \\
{BSO}
 \ar[r]^{BJ}
 \ar[d]_-{\beta w_{2}}
&
{\Line{S}}
 \ar[r]^-{(\slot)\Smash K}
 &
{\Line{K}\heq B\GL{K}}
 \ar@{=}[d]
 \\
{K (\Z,3)}
 \ar@{-->}[rr]^-{BABS}
& &
{B\GL{K}}
}
\]
The map $ABS$ induces a map
\begin{equation}\label{eq:33}
ABS\colon K (\Z,2)\to \GL{K},
\end{equation}
and the diagram
suggests that we ask whether this map deloops to give a map
\[
   B(ABS)\colon K (\Z,3) \to B\GL{K},
\]
as indicated.    Not surprisingly, this question is related to the
multiplicative properties of the orientation.

The Thom spectrum associated $M\Spinc$ associated to $B\Spinc$ is
a commutative $S$-algebra, as is $K$-theory.  The construction of
Atiyah-Bott-Shapiro produces a map of spectra
\[
   t\colon M\Spinc \to K,
\]
and Michael Joachim \cite{MR2122155} shows that $t$ can be refined to
a map of commutative $S$-algebras.  As we shall see in
\S\ref{sec:mult-orient-comp}, it follows that $K(\Z,2)\to \GL{K}$ is a
map of infinite loop spaces.

We use these ideas to twist $K$-theory by maps $X\to K (\Z,3)$ in
\S\ref{sec:application:-k-z}.

\section{Multiplicative orientations and comparison of Thom spectra}
\label{sec:mult-orient-comp}

The most familiar orientations are exponential: the Thom class of a
Whitney sum is the product of the Thom classes.  For example, consider
the case of Spin bundles and real $K$-theory, $KO$.  Atiyah, Bott, and
Shapiro show that the Dirac operator associates to a spin vector
bundle $V\to X$ a Thom class $t (V) \in KO (X^{V}).$  If $MSpin$ is
the Thom spectrum of the universal $Spin$ bundle over $BSpin$, then we
can view their construction as corresponding to a map of spectra
\[
t\colon MSpin \to KO.
\]

If $W\to Y$ is another spin vector bundle, then
\[
    (X\times Y)^{V\oplus W} \heq X^{V} \Smash Y^{W},
\]
and it turns out that with respect to the resulting isomorphism
\[
      KO (    (X\times Y)^{V\oplus W})\iso KO (X^{V} \Smash Y^{W}),
\]
one has
\begin{equation}\label{eq:34}
    t (V\oplus W) = t (V) \Smash t (W).
\end{equation}

Now the sum of vector bundles gives $MSpin$ the structure of a ring
spectrum, and so the multiplicative property \eqref{eq:34} (together
with a unit condition, which says that $t (\uln{0}) = 1$) corresponds
to the fact that
\[
   t\colon MSpin\to KO
\]
is a map of monoids in the homotopy category of spectra.

It is important that $t$ is in fact a map of commutative monoids in
the $\i$-category of spectra.  More precisely, $MSpin$ and $KO$ are both
commutative $S$-algebras, and it turns out
\cite{MR2122155,AHR:orientation} that $t$ is a map of commutative $S$-algebras.

The construction of classical Thom spectra such as $MSpin$, $MSO$,
$MU$ as commutative $S$-algebras (equivalently, $E_{\infty}$ ring
spectra) is due to \cite{MQRT:ersers,LMS:esht}.   In this section, we discuss
the theory from the $\i$-categorical point of view.   We'll see (Remark
\ref{rem-2}) that
this gives a way to think about the comparison of our Thom spectrum
to classical constructions.  It also provides some tools we use to
build twists of $K$-theory from $PU$-bundles.

We begin with a question.  Suppose that $A$ is a commutative
$S$-algebra.  Under what conditions on a map $f$ should we expect that the Thom
\[
         X^{f} = \colim (\Sing X \xra{f} \Line{A} \xra{j} \Mod{A})
\]
is a commutative $A$-algebra?  And in that situation, how do we
understand $A$-algebra maps out of $X^{f}$?

In the context of $\i$-categories, spaces play the role which sets
play in the context of classical categories, and so we begin by studying
the situation of a discrete commutative ring $R$ and a set $X$.
In that case, an $R$-line is just a free rank-one $R$-module, and a
bundle of $R$-lines  $\xi$ over $X$ is just a collection of $R$-lines,
indexed by the points $x\in X$.   We can think of this as a functor
\[
   \xi\colon X \to \Line{R}
\]
from $X$, considered as a discrete category, to the category of
$R$-lines: free rank-one $R$-modules and isomorphisms.  The ``Thom
spectrum''
\[
  X^{\xi} = \colim \left( X \xra{\xi} \Line{R} \xra{}\Mod{R} \right)
\]
is easily seen to be the sum
\[
    X^{\xi} \iso \bigoplus_{x\in X} \xi_{x}.
\]

Now suppose that $R$ is a commutative ring, so that $\Mod{R}$ is a
symmetric monoidal category, and $\Line{R}$ is the maximal
sub-groupoid of $\Mod{R}$ generated by $R$.  If $X$ is a
discrete abelian group, then we may consider $X$ as a symmetric
monoidal category with objects the elements of $X.$  It is then not
difficult to check the following.

\begin{Proposition} \label{t-pr-discrete-sym-mon-functor-com-R-alg}
If $\xi\colon X \to \Line{R}$ is a map of symmetric monoidal categories,
then $X^{\xi}$ has structure of a commutative $R$-algebra.
\end{Proposition}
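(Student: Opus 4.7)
The plan is to exploit the general principle that the colimit functor $\colim\colon \Mod{R}^{X} \to \Mod{R}$ is symmetric monoidal when $\Mod{R}^{X}$ is equipped with the Day convolution induced by the symmetric monoidal structure on $X$ (as an abelian group under $+$). Consequently it carries commutative monoids in $\Mod{R}^{X}$, namely symmetric monoidal functors $X \to \Mod{R}$, to commutative monoids in $\Mod{R}$, i.e., commutative $R$-algebras. First I would note that $\Mod{R}$ is symmetric monoidal and cocomplete with $\otimes_{R}$ preserving colimits in each variable separately, and that the inclusion $j\colon \Line{R} \to \Mod{R}$ is symmetric monoidal, so that the composite $j\xi\colon X \to \Mod{R}$ is a symmetric monoidal functor.

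Concretely, since $X^{\xi} = \bigoplus_{x \in X} \xi(x)$ and $\otimes_{R}$ distributes over direct sums, the multiplication
\[
\mu\colon X^{\xi} \otimes_{R} X^{\xi} \;\iso\; \bigoplus_{x,y \in X} \xi(x) \otimes_{R} \xi(y) \;\xra{\oplus \sigma_{x,y}}\; \bigoplus_{x,y \in X} \xi(x+y) \;\longrightarrow\; X^{\xi}
\]
is assembled from the coherent structure isomorphisms $\sigma_{x,y}\colon \xi(x) \otimes_{R} \xi(y) \xra{\iso} \xi(x+y)$ that witness $\xi$ being monoidal, followed by the canonical map from the doubly-indexed direct sum to $X^{\xi} = \bigoplus_{z} \xi(z)$ induced by the addition map $X \times X \to X$. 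The unit $R \to X^{\xi}$ is the composite $R \xra{\iso} \xi(0) \hookrightarrow X^{\xi}$ provided by the unit coherence of $\xi$.

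Each ring axiom then reduces to one of the coherence axioms for $\xi$: associativity of $\mu$ follows from the associativity hexagon together with associativity of $+$ in $X$; the unit laws follow from the unit coherence together with $0$ being a two-sided identity; and commutativity of $\mu$ follows from the symmetry isomorphism of $\xi$ together with commutativity of $+$. No step is genuinely difficult. The only thing requiring real attention is the bookkeeping needed to check that the interchange isomorphism $\bigl(\bigoplus_{x} \xi(x)\bigr) \otimes_{R} \bigl(\bigoplus_{y} \xi(y)\bigr) \iso \bigoplus_{x,y} \xi(x) \otimes_{R} \xi(y)$ is compatible with the symmetries on both sides. This is encoded automatically by the Day convolution formalism in the first paragraph, but can also be checked directly, which is the approach I would take here as a warm-up to the genuinely $\i$-categorical version that surely follows.
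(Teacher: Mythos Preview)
Your argument is correct: equipping $X^{\xi}=\bigoplus_{x}\xi(x)$ with the multiplication assembled from the monoidal structure isomorphisms $\sigma_{x,y}\colon \xi(x)\otimes_{R}\xi(y)\xra{\iso}\xi(x+y)$ and unit $R\iso\xi(0)\hookrightarrow X^{\xi}$, and then reducing each ring axiom to the corresponding coherence axiom for $\xi$, is exactly the kind of direct check the paper has in mind when it says ``it is then not difficult to check the following.'' The Day-convolution repackaging you sketch is a clean way to organize that bookkeeping.

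The paper, however, deliberately does \emph{not} give this direct proof. Instead it takes what it calls a ``circuitous'' route: it identifies $\Line{R}$ with the category $\Tors{\GL{R}}$ of $\GL{R}$-torsors via $L\mapsto T(L)=\Line{R}(R,L)$, shows that $X^{\xi}\iso \Z[P(\xi)]\otimes_{\Z[\GL{R}]}R$ where $P(\xi)$ is the associated principal $\GL{R}$-bundle, and then observes that when $\xi$ is symmetric monoidal, $P(\xi)$ is an abelian group sitting in an extension $\GL{R}\to P(\xi)\to X$, so that $X^{\xi}$ is the pushout of commutative rings $\Z[P(\xi)]\leftarrow \Z[\GL{R}]\to R$. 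What this buys is a formula that transports verbatim to the $\i$-categorical setting (replacing $\Z[\slot]$ by $\splus$ and abelian groups by group-like $\einfty$ spaces), and it produces along the way the principal-bundle description $X^{\xi}\heq\splus P(\xi)\Smash_{\splus\GL{A}}A$ used later for twisted $K$-theory and $\TMF$. Your approach is more elementary and self-contained for the discrete case, but the paper's detour is chosen precisely because it yields the tools needed in \S\ref{sec:application:-k-z} and \S\ref{sec:appl-degr-four}.
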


The analogue of this result holds in the $\i$-categorical setting; see
Theorem \ref{t-pr-thom-spec-comm-A-alg} below.  It is possible to give
a direct proof; instead we sketch the circuitous proof given in
\cite{0810.4535v3}, as some of the results which arise along the way
will be useful in sections \ref{sec:application:-k-z} and
\ref{sec:appl-degr-four}.

We begin with another construction of the $R$-module $X^{\xi}$ in the
discrete associative case.  Let $\GL{R}$ be the group of units of $R$.
Note that the free abelian group functor
\begin{equation}\label{eq:35}
   \Z\colon \CatOf{sets} \to \CatOf{abelian goups}
\end{equation}
induces a functor
\begin{equation}\label{eq:36}
\xymatrix{
{\Z\colon \CatOf{groups}}
 \ar@<+1ex>[r]
&
{\CatOf{rings}\colon \GLsym}
 \ar@<+1ex>[l]
}
\end{equation}
whose right adjoint is $\GLsym$.  In particular, we have a natural map
of rings
\[
     \Z[\GL{R}] \to R
\]
and so the colimit-preserving functor
\[
    \CatOf{$\GL{R}$-sets} \xra{\Z[\slot]\otimes_{\Z[\GL{R}]}R} \Mod{R}.
\]
This functor restricts to an equivalence of categories
\begin{equation}\label{}
\xymatrix{
{\Z[\slot]\otimes_{\Z[\GL{R}]} R\colon \Tors{\GL{R}}}
 \ar@<+1ex>[r]
&
{\Line{R}\colon T}.
 \ar@<+1ex>[l]
}
\end{equation}
Here $\Tors{\GL{R}}$ is the category of $\GL{R}$-torsors, and the
inverse equivalence is the functor $T$ which associates to an
$R$-line $L$ the $\GL{R}$-torsor
\[
T (L) \eqdef \Line{R} (R,L) \iso  \{u \in L | Ru \iso L \}.
\]

That is, we have the following diagram of categories which commutes up
to natural isomorphism
\[
\xymatrix{
{\Line{R}}
 \ar[r]
 \ar@<+1ex>^{T}_{\iso}[d]
&
{\Mod{R}}
\\
{\Tors{\GL{R}}}
 \ar@<+1.5ex>[u]^{\Z[\slot]\otimes_{\Z[\GL{R}]} R}
 \ar[r]
&
{\CatOf{$\GL{R}$-sets}.}
 \ar[u]_{\Z[\slot]\otimes_{\Z[\GL{R}]} R}
}
\]
Moreover, the vertical arrows preserve colimits, and the left vertical
arrows comprise an equivalence.

If $\xi$ is a bundle of $R$-lines over $X$, we write $P (\xi)$ for the
$\GL{R}$-set
\[
   P (\xi) = \colim\left(X \xra{\xi} \Line{R}\xra{T}
   \Tors{\GL{R}}
 \xra{} \CatOf{$\GL{R}$-sets}\right).
\]
That is, $P (\xi)$ is the $\GL{R}$-torsor over $X$ whose fiber at
$x\in X$ is  $P (\xi)_{x} = T (\xi_{x}).$

\begin{Proposition} \label{t-pr-comparison-disc-thom-sp}
There is a natural isomorphism of $R$-modules
\[
   X^{\xi} \iso \Z[P (\xi)]\otimes_{\Z[\GL{R}]} R.
\]
\end{Proposition}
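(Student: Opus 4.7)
The plan is to exploit the commutative square of categories displayed just before the proposition, together with the fact that the functor $\Z[\slot]\otimes_{\Z[\GL{R}]}R$ is a left adjoint and therefore preserves colimits. The key observation is that $X^{\xi}$ and $P(\xi)$ are constructed by exactly the same recipe, one in $\Mod{R}$ and the other in $\GL{R}$-sets, and the two ambient categories are linked by a colimit-preserving functor which (by the equivalence $T$) agrees on the relevant subcategories of lines and torsors up to natural isomorphism.

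First I would verify that the functor
\[
   F \eqdef \Z[\slot]\otimes_{\Z[\GL{R}]}R \colon \CatOf{$\GL{R}$-sets} \to \Mod{R}
\]
preserves arbitrary colimits. This is immediate because $F$ is the composite of the free abelian group functor \eqref{eq:35}, which is left adjoint to the forgetful functor, with extension of scalars along $\Z[\GL{R}]\to R$, which is left adjoint to restriction of scalars; both are left adjoints. Next, I would recall that the square
\[
\xymatrix{
{\Line{R}} \ar[r] \ar@<+1ex>^{T}_{\iso}[d]
& {\Mod{R}} \\
{\Tors{\GL{R}}} \ar@<+1.5ex>[u]^{F} \ar[r]
& {\CatOf{$\GL{R}$-sets}} \ar[u]_{F}
}
\]
commutes up to natural isomorphism. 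Composing with $\xi\colon X \to \Line{R}$ and applying $F$, we obtain a natural isomorphism of functors $X \to \Mod{R}$ between the original diagram $j\xi$ and the diagram $F\circ (\text{inclusion})\circ T \xi$.

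Now I would take colimits. Because naturally isomorphic diagrams have canonically isomorphic colimits,
\[
   X^{\xi} = \colim(j\xi) \iso \colim\bigl(F \circ (\text{inclusion}) \circ T\xi\bigr),
\]
and since $F$ preserves colimits, this last colimit is $F$ applied to $\colim$ of the composite $X \to \Tors{\GL{R}} \to \CatOf{$\GL{R}$-sets}$, which is by definition $P(\xi)$. This gives
\[
   X^{\xi} \iso F(P(\xi)) = \Z[P(\xi)] \otimes_{\Z[\GL{R}]} R,
\]
as desired. Naturality in $\xi$ follows because every construction in the argument (the natural isomorphism of diagrams, the colimit, and the functor $F$) is natural.

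The only mildly subtle point, and the step I would be most careful about, is the commutativity of the square of categories: one must check that for an $R$-line $L$, the natural map $F(T(L)) = \Z[T(L)]\otimes_{\Z[\GL{R}]}R \to L$ sending $u\otimes 1 \mapsto u$ is an isomorphism of $R$-modules, and that this is natural in $L$. This is straightforward since $T(L)$ is a free $\GL{R}$-torsor and any choice of basepoint $u_0 \in T(L)$ produces compatible isomorphisms $\Z[\GL{R}]\otimes_{\Z[\GL{R}]}R \iso R \xra{\cdot u_{0}} L$; naturality reduces to the fact that $T$ is functorial. Once this square is in hand, the rest of the argument is a formal colimit manipulation.
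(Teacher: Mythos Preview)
Your proof is correct and follows essentially the same approach as the paper's own argument: both use the commutative square to identify the diagram $j\xi$ with $F \circ (\text{inclusion}) \circ T\xi$, and then pull the colimit-preserving functor $F$ past the colimit defining $P(\xi)$. The paper's proof is a terse three-line chain of isomorphisms; you have simply made explicit the two ingredients it leaves implicit, namely that $F$ preserves colimits (as a composite of left adjoints) and that the square really does commute up to natural isomorphism via the evaluation map $F(T(L)) \to L$.
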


\begin{proof}
We have
\begin{align*}
     X^{\xi} &= \colim\left(X \xra{\xi} \Line{R} \to \Mod{R}\right) \\
             &\iso
\colim\left( X \xra{T\xi} \Tors{\GL{R}} \xra{} \CatOf{$\GL{R}$-sets}
 \xra{\Z[\slot]\otimes_{\Z[\GL{R}]}R} \Mod{R} \right) \\
& \iso \Z[P (\xi)]\otimes_{\Z[\GL{R}]} R.
\end{align*}
\end{proof}

Now suppose that $R$ is a commutative ring, and $X$ is an abelian
group.  If $\xi\colon X\to \Line{R}$ is a symmetric monoidal functor, then
\[
     \GL{R} \to P (\xi) \to X
\]
is an extension of abelian groups.  The adjunction
\eqref{eq:36} restricts further to an adjunction
\begin{equation}\label{eq:38}
\xymatrix{
{\Z\colon \CatOf{abelian groups}}
 \ar@<+1ex>[r]
&
{\CatOf{commutative rings}\colon \GLsym,}
 \ar@<+1ex>[l]
}
\end{equation}
and so we have maps of commutative rings
\[
      \Z[P (\xi)] \leftarrow \Z[\GL{R}] \rightarrow R.
\]
The isomorphism of Proposition \ref{t-pr-comparison-disc-thom-sp} has
the following consequence.

\begin{Proposition}\label{t-pr-thom-spec-comm-disc}
If $R$ is a commutative ring and $\xi\colon X\to \Line{R}$ is a
symmetric monoidal functor, then $X^{\xi}$ is a commutative $R$-algebra;
indeed, it is the pushout in the category of commutative rings
\[
\begin{CD}
     \Z[\GL{R}] @>>> R \\
     @VVV @VVV \\
     \Z[P (\xi)] @>>> X^{\xi}.
\end{CD}
\]
\end{Proposition}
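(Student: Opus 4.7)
The plan is to bootstrap from the $R$-module isomorphism already established in Proposition~\ref{t-pr-comparison-disc-thom-sp}, namely
\[
X^{\xi} \iso \Z[P (\xi)]\otimes_{\Z[\GL{R}]} R,
\]
and promote it to a commutative $R$-algebra statement by showing that each factor on the right is naturally a commutative ring and that the displayed tensor product realizes the pushout in commutative rings.

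First I would check that $P(\xi)$ inherits the structure of an abelian group making
\[
\GL{R} \to P(\xi) \to X
\]
an extension. The category $\Tors{\GL{R}}$ is symmetric monoidal under the Baer-sum (tensor product of $\GL{R}$-torsors), and the equivalence $T\colon \Line{R}\to \Tors{\GL{R}}$ is a symmetric monoidal equivalence. Since $\xi$ is symmetric monoidal by hypothesis, the composite $T\xi\colon X\to \Tors{\GL{R}}$ is symmetric monoidal, and $P(\xi)$ is computed as the colimit of $T\xi$. Viewing $X$ as a symmetric monoidal groupoid (a Picard groupoid, in fact) and unpacking the colimit, $P(\xi)$ acquires a multiplication induced by the monoidal structure of $X$ and $\Tors{\GL{R}}$; it is the abelian group classifying the extension determined by $\xi$.

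Second I would apply the symmetric monoidal group-ring adjunction \eqref{eq:38}. This produces a diagram of commutative rings
\[
\Z[P (\xi)] \leftarrow \Z[\GL{R}] \rightarrow R,
\]
where the left map comes from $\GL{R}\to P(\xi)$ and the right map is the counit of the adjunction \eqref{eq:38}, i.e., the canonical map $\Z[\GL{R}]\to R$ sending a unit of $R$ to itself. The pushout of this diagram in the category of commutative rings is computed as the relative tensor product $\Z[P(\xi)]\otimes_{\Z[\GL{R}]} R$, equipped with its termwise multiplication, and this commutative ring structure is compatible with the $R$-module structure.

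Combining this with Proposition~\ref{t-pr-comparison-disc-thom-sp} then produces a natural map of $R$-modules from $X^{\xi}$ to the indicated pushout which is an isomorphism; transporting the commutative ring structure along this isomorphism endows $X^{\xi}$ with the claimed commutative $R$-algebra structure and verifies the pushout diagram. The main obstacle is the first step: one has to check with some care that the colimit defining $P(\xi)$, a priori formed in $\GL{R}$-sets, genuinely lifts to an abelian group and that the lift is compatible with the symmetric monoidal structures used in forming the group ring. Once that is in hand, the remaining content is a formal application of the adjunction \eqref{eq:38} together with the well-known fact that relative tensor product computes the pushout of commutative rings.
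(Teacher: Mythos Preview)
Your proposal is correct and follows essentially the same approach as the paper. The paper's argument is the paragraph immediately preceding the proposition: it asserts that $\GL{R}\to P(\xi)\to X$ is an extension of abelian groups when $\xi$ is symmetric monoidal, invokes the adjunction \eqref{eq:38} to obtain the span $\Z[P(\xi)]\leftarrow\Z[\GL{R}]\to R$ of commutative rings, and then cites Proposition~\ref{t-pr-comparison-disc-thom-sp}; you have simply unpacked these steps more explicitly (and correctly flagged the one point requiring care, namely that the colimit defining $P(\xi)$ inherits an abelian group structure).
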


The preceding discussion generalizes elegantly and directly to spaces
and spectra.  The generalization illustrates  how spaces
play the role in $\i$-categories that sets play in categories.  The
reader may find it useful to consult the table in
Figure~\ref{fig-class-i-cat}.

\begin{figure}\label{fig-class-i-cat}
\caption{Selected instances of the analogy
  Sets:Spaces::Categories:$\i$-categories}
\vspace{5 pt}
\begin{tabular}{|l|l|l|}
\hline
Categorical notion & $\i$-categorical notion  & Alternate  name/description  \\
\hline
\hline
Category & $\i$-category & \\
\hline
Set & $\i$-groupoid & Space/Kan complex  \\
\hline
Monoid & monoidal $\i$-groupoid & $A_{\infty}$ space; $\ptspace$-algebra\\
\hline
Group    & group-like monoidal   & group-like $\ainfty$ space\\
& $\i$-groupoid & \\
\hline
Abelian group & group-like symmetric  & group-like $E_{\infty}$ space; \\
              & monoidal $\i$-groupoid & $(-1)$-connected spectrum  \\
\hline
Abelian group & spectrum &  \\
\hline
The ring $\Z$ & The sphere spectrum $S$ & \\
\hline
Ring & Monoid in spectra & $S$-algebra or $A_{\infty}$ ring spectrum\\
\hline
Commutative ring & Commutative monoid in spectra & Commutative
$S$-algebra or $E_{\infty}$ ring spectrum \\
\hline
The functor $\Z$ & The functor $\splus$ &  \\
\hline
$\Mod{R}$      & $\Mod{A}$  & \\
\hline
$\Line{R}$     & $\Line{A}$ & $B\GL{A}$\\
\hline
$\GL{R}$       & $\Line{A} (A,A) =  \Aut_{A} (A)$ & $\GL{A}$\\
\hline
$\GL{R}$-set &  $\GL{A}$-space  & $A_{\infty}$ $\GL{A}$-space  \\
\hline
$\GL{R}$-torsors & $\Tors{\GL{A}}$ &  \\
\hline
$\otimes$ & $\Smash$ &
\\
\hline
\end{tabular}
\end{figure}

Let $\spaces$ be an $\i$-category of spaces.  The
analogue of the adjunction \eqref{eq:35} is
\begin{equation}\label{eq:39}
\xymatrix{
{\splus\colon \spaces}
 \ar@<+1ex>[r]
&
{\spectra\colon \linf.}
 \ar@<+1ex>[l]
}
\end{equation}

\begin{Definition}
Let $\Alg (\ptspace)$ be the $\i$-category of monoids in the
$\i$-category of $\spaces$.
\end{Definition}

We introduce the awkward name $\ptspace$-algebra to emphasize that
these are more general than monoids with respect to the classical
product of topological spaces.  The symmetric monoidal structure on
$\spaces$ is such that $\Alg (\ptspace)$ is a model for the
$\i$-category of $\ainfty$ spaces.  A
$\ptspace$-algebra $X$ is \emph{group-like} if $\pi_{0}X$ is a group.

\begin{Definition}
We write $\Alg (\ptspace)^{\times}$ for the $\i$-category of
group-like monoids.
\end{Definition}

The adjunction \eqref{eq:39} restricts to an adjunction
\begin{equation} \label{eq:40}
\xymatrix{
{\splus\colon \Alg (\ptspace)^{\times}}
 \ar@<+1ex>[r]
&
{\Alg (S)\colon \GLsym.}
 \ar@<+1ex>[l]
}
\end{equation}

\begin{Remark}
If $X$ is a group-like monoid in $\spaces$,
then $\Sing X$ is a group-like monoidal $\i$-groupoid.  One way to see
that a group-like $\ainfty$ space is the appropriate generalization of
a group is to observe that if $Z$ is an object in a  \emph{category},
then $\End (Z)$ is a monoid, and $\Aut (Z)$ is a
group.  If
$Z$ is an object in an \emph{$\i$-category}, then $\End
(Z)$ is a monoidal $\i$-groupoid, and $\Aut (Z)$ is group-like.
\end{Remark}

In particular, as we have already discussed in
\S\ref{sec:line_a-gla}, one construction of the right adjoint
$\GLsym$ is the following.   If $A$ is an $S$-algebra, then it has an
$\i$-category of modules $\Mod{A}$.  In $\Mod{A}$ we have the maximal
sub-$\i$-groupoid $\Line{A}$ whose objects are weakly equivalent to
$A$.  We can define
\[
   \GL{A} = \Aut (A) = \Line{A} (A,A)
\]
to be the subspace of (the geometric realization of) $\Mod{A} (A,A)$
consisting of homotopy equivalences: it is a group-like
$\ptspace$-algebra.    We also write
$B\GL{A}$ for the full $\i$-subcategory of $\Line{A}$ on the single object
$A$: this is the $\i$-groupoid with $\Aut{A}$ as its simplicial set of
of morphisms.

Since $\GL{A}$ is a (group-like) monoid in the
symmetric monoidal
$\i$-category of spaces, we can form the $\i$-category
of $\GL{A}$-spaces,  and then define $\Tors{\GL{A}}$ to be the maximal subgroupoid
whose objects are $\GL{A}$-spaces $P$ which admit an equivalence of
$\GL{A}$-spaces
\[
    \GL{A} \heq P.
\]

The adjunction \eqref{eq:40} provides
a map of $S$-algebras
\[
   \splus \GL{A} \to A,
\]
and so a ($\i$-category) colimit-preserving functor
\[
   \splus (\slot)\Smash_{\splus \GL{A}}A\colon \CatOf{$\GL{A}$-spaces} \xra{}
   \Mod{A},
\]
which restricts to an equivalence of $\i$-categories
\[
 \splus (\slot)\Smash_{\splus \GL{A}}A \colon \Tors{\GL{A}} \to \Line{A}.
\]
The inverse equivalence is the functor
\[
   T\colon \Line{A} \to \Tors{\GL{A}}
\]
which to an $A$-line $L$ associates the $\GL{A}$-torsor
\[
T (L) \eqdef \Line{A} (A,L).
\]

Putting all these together, we have the homotopy commutative diagram
of $\i$-categories
\[
\xymatrix{
{\Line{A}}
 \ar[r]
 \ar@<+1ex>^{T}_{\heq}[d]
&
{\Mod{A}}
\\
{\Tors{\GL{A}}}
 \ar@<+1.5ex>[u]^{\splus (\slot)\Smash_{\splus \GL{A}} A}
 \ar[r]
&
{\CatOf{$\GL{A}$-spaces},}
 \ar[u]_{\splus (\slot)\Smash_{\splus \GL{A}} A}
}
\]
in which the vertical arrows preserve $\i$-categorical colimits, and
the left vertical arrows comprise an equivalence.

Now let $X$ be a space, and let
\[
  \xi\colon X \to \Line{A}
\]
be a bundle of $A$-lines over $X$.  Recall that
\[
   X^{\xi} = \colim \left(\Sing X \xra{\xi} \Line{A} \xra{} \Mod{A}\right).
\]
On the other hand, let
\[
   P (\xi) = \colim \left(\Sing X \xra{\xi} \Line{A} \xra{T} \Tors{\GL{A}}
   \xra{} \CatOf{$\GL{A}$-spaces}\right).
\]
We have the following analogue of Proposition
\ref{t-pr-comparison-disc-thom-sp}.

\begin{Proposition} \label{t-pr-thom-spectrum-tensor-prod}
There is a natural equivalence of $A$-modules
\begin{equation}\label{eq:9}
   X^{\xi} \heq \splus P (\xi)\Smash_{\splus \GL{A}} A.
\end{equation}
\end{Proposition}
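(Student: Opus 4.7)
The plan is to reduce the claim to a colimit chase through the commutative square of $\i$-categories displayed just before the statement. The key structural inputs are already on the table: (i) the equivalence $T\colon \Line{A} \heq \Tors{\GL{A}}$ with inverse $\splus(\slot)\Smash_{\splus\GL{A}} A$; (ii) the compatibility of this equivalence with the forgetful functors to $\Mod{A}$ and to $\GL{A}$-spaces; and (iii) the fact that both vertical functors $\splus(\slot)\Smash_{\splus\GL{A}} A$ preserve $\i$-categorical colimits. Granted (i)--(iii), the proof is essentially a diagram chase.

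First I would expand the defining colimit
\[
X^{\xi} \heq \colim\left(\Sing X \xra{\xi} \Line{A} \longrightarrow \Mod{A}\right)
\]
by inserting the equivalence $T$ and its inverse into the second factor. The commutativity of the square gives a natural equivalence of composite functors
\[
\bigl(\Line{A}\to\Mod{A}\bigr) \;\heq\; \bigl(\Line{A}\xra{T}\Tors{\GL{A}}\to\CatOf{$\GL{A}$-spaces}\xra{\splus(\slot)\Smash_{\splus\GL{A}} A}\Mod{A}\bigr),
\]
so $X^{\xi}$ is equivalent to the colimit of the latter composite along $\xi$. Second, I would invoke the colimit-preservation of $\splus(\slot)\Smash_{\splus\GL{A}} A$ to commute it past the colimit. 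This yields
\[
X^{\xi} \;\heq\; \splus\!\left(\colim\bigl(\Sing X \xra{\xi}\Line{A}\xra{T}\Tors{\GL{A}}\to\CatOf{$\GL{A}$-spaces}\bigr)\right)\Smash_{\splus\GL{A}} A,
\]
and the inner colimit is by definition $P(\xi)$, giving the asserted equivalence $X^{\xi}\heq \splus P(\xi)\Smash_{\splus\GL{A}} A$. Naturality in $\xi$ follows because every ingredient is a natural equivalence of functors of $\i$-categories.

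The main subtlety, and the step I expect to be the principal obstacle, is justifying (iii): that tensoring a $\GL{A}$-space with $A$ over $\splus\GL{A}$ commutes with $\i$-categorical colimits in the $\GL{A}$-space variable. This is a statement about the relative smash product of an $\splus\GL{A}$-module spectrum and an $A$-module, namely that it is a left adjoint (in fact, it arises as the composite of $\splus\colon\spaces\to\spectra$ applied fiberwise, which is a left adjoint by \eqref{eq:39}, followed by extension of scalars $\Smash_{\splus\GL{A}} A$ along the unit $\splus\GL{A}\to A$ of the adjunction \eqref{eq:40}, which is a left adjoint between module $\i$-categories). Once this is in place, colimits may be commuted past the functor essentially formally. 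The equivalence (i) itself was set up in \cite{0810.4535v3}, and the commutativity of the square in (ii) is visible from the explicit description of $T$ as $L\mapsto \Line{A}(A,L)$, under which the universal $\GL{A}$-action by precomposition corresponds to the $\GL{A}$-action on $L$ coming from its identification with $A$. With these functorial inputs the proof is a single line of colimit chasing, exactly parallel to Proposition~\ref{t-pr-comparison-disc-thom-sp} in the discrete case.
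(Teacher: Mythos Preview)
Your proposal is correct and follows essentially the same approach as the paper: the paper does not spell out a proof of this proposition, but presents it explicitly as the analogue of Proposition~\ref{t-pr-comparison-disc-thom-sp}, whose proof is exactly the colimit chase through the commutative square that you describe. Your identification of the key inputs (the equivalence $T$, the commutativity of the square, and colimit preservation of $\splus(\slot)\Smash_{\splus\GL{A}} A$) matches the structure the paper has set up immediately before the statement.
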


Now we turn to the commutative case.

\begin{Definition}
We write $\CommAlg (\ptspace)$ for the $\i$-category of commutative
monoids in $\spaces$.  It is equivalent to the nerve of the simplicial category of (cofibrant and fibrant) $\einfty$
spaces.  We write $\CommAlg (\ptspace)^{\times}$ for the $\i$-category
of group-like commutative monoids, which models group-like $\einfty$
spaces.
\end{Definition}

The adjunction \eqref{eq:40} restricts to the analogue of the
adjunction \eqref{eq:38}, namely
\begin{equation} \label{eq:41}
\xymatrix{
{\splus\colon \CommAlg (\ptspace)^{\times}}
 \ar@<+1ex>[r]
&
{\CommAlg (S)\colon \GLsym}
 \ar@<+1ex>[l]
}
\end{equation}

The reader will notice that in the table in Figure
\ref{fig-class-i-cat}, we mention  two models
for ``abelian groups'', namely, group-like $\einfty$ spaces and
spectra.  It
is a classical theorem of May \cite{May:gils,MR0339152}, reviewed for
example in \cite[\S3]{0810.4535v3},
that the functor $\linf$ induces an equivalence
of $\i$-categories
\[
    \linf\colon \CatOf{$(-1)$-connected spectra} \heq \CommAlg (\ptspace)^{\times},
\]
and so we may rewrite the adjunction \eqref{eq:41} as
\begin{equation}\label{eq:46}
\xymatrix{
{\splus\linf\colon \CatOf{$(-1)$-connected spectra}}
 \ar@<-1ex>[r]_-{\linf}^-{\heq}
&
{\CommAlg (\ptspace)^{\times}}
 \ar@<+1ex>[r]^-{\splus}
 \ar@<-1ex>[l]
&
{\CommAlg (S)\colon \glsym}
 \ar@<+1ex>[l]^-{\GLsym}
}
\end{equation}
(The left adjoints are written on top, but the pair of adjoints
on the left is an equivalence of $\i$-categories).  Note that we have
introduced the functor
$\glsym,$ with the property that if $A$ is a commutative $S$-algebra,
then
\[
       \GL{A} \heq \linf \gl{A}.
\]
We also define $b\gl{A} = \Sigma \gl{A}$: then $\linf b\gl{A} \heq B\GL{A}$.

Thus a map of spectra
\[
    f\colon b\to b\gl{A}
\]
may be viewed equivalently as a map of group-like commutative
$\ptspace$-algebras
\[
    \linf f\colon B = \linf b \to B\GL{A}
\]
or as a map of symmetric monoidal $\i$-groupoids
\begin{equation}\label{eq:8}
   \xi\colon    \Sing B \xra{} \Line{A}.
\end{equation}

Form the pull-back diagram
\begin{equation}\label{eq:43}
\begin{CD}
\gl{A} @= \gl{A} \\
@VVV    @VVV \\
p  @>>> e\gl{A}\heq\ptspace \\
@VVV @VVV \\
b @> f >> b\gl{A}.
\end{CD}
\end{equation}
One checks \cite[Lemma 8.23]{0810.4535v3} that
\[
      P (\xi) \heq \linf p,
\]
and so we have the following.

\begin{Proposition}  \label{t-pr-thom-spec-comm-A-alg}
The Thom spectrum of the map $\xi$ of symmetric monoidal
$\i$-groupoids \eqref{eq:8}
is a commutative $A$-algebra; indeed, we have
\[
     X^{\xi} \heq \splus P (\xi) \Smash_{\splus \GL{A}} A \heq
       \splus \linf p \Smash_{\splus \linf \gl{A}} A.
\]
\end{Proposition}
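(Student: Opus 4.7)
The plan is to bootstrap from the associative/non-commutative case already established in Proposition~\ref{t-pr-thom-spectrum-tensor-prod} and then upgrade all the structure in sight to the symmetric monoidal setting, exploiting the chain of adjunctions in \eqref{eq:46}.

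First I would produce the underlying $A$-module equivalence by appealing directly to Proposition~\ref{t-pr-thom-spectrum-tensor-prod}: ignoring the symmetric monoidal structure on $\xi$, we already have
\[
X^{\xi} \heq \splus P(\xi) \Smash_{\splus \GL{A}} A
\]
as $A$-modules. The content that remains is (a) to endow both sides with a commutative $A$-algebra structure compatibly, and (b) to identify $P(\xi)$ with $\linf p$ where $p$ is the fiber in \eqref{eq:43}.

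Next I would upgrade $P(\xi)$ to a group-like commutative $\ptspace$-algebra. The $\i$-groupoid $\Tors{\GL{A}}$ inherits a symmetric monoidal structure from that of $\GL{A}$, and the equivalence $T\colon\Line{A}\heq\Tors{\GL{A}}$ of Section~\ref{sec:mult-orient-comp} is symmetric monoidal (it is essentially the delooping equivalence in the $\einfty$-setting). Thus if $\xi\colon\Sing B\to\Line{A}$ is symmetric monoidal, so is $T\xi\colon\Sing B\to\Tors{\GL{A}}$, and its colimit $P(\xi)$ acquires the structure of a group-like commutative $\ptspace$-algebra (since symmetric monoidal colimits of group-like commutative $\ptspace$-algebras are again such, using that $\Sing B$ is itself a group-like commutative $\ptspace$-algebra in the symmetric monoidal category of $\i$-groupoids). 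Applying $\splus$, which by \eqref{eq:41} is the left adjoint of a symmetric monoidal adjunction, promotes $\splus P(\xi)$ to a commutative $S$-algebra and both structure maps
\[
\splus\GL{A}\longrightarrow \splus P(\xi),\qquad \splus\GL{A}\longrightarrow A
\]
to maps of commutative $S$-algebras (the second being the counit of the adjunction \eqref{eq:41}). The relative smash product $\splus P(\xi)\Smash_{\splus\GL{A}}A$ is then the pushout in $\CommAlg(S)$, and hence is naturally a commutative $A$-algebra.

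Finally I would identify $P(\xi)$ with $\linf p$ by citing \cite[Lemma~8.23]{0810.4535v3}: applying $\linf$ to the fiber sequence \eqref{eq:43} produces a $\GL{A}$-torsor over $B=\linf b$ whose classifying map is exactly $\linf f\htp \xi$, and a torsor is determined by its classifying map. Substituting into the displayed formula gives the second stated equivalence. The main obstacle — and really the only conceptually delicate point — is verifying that the colimit defining $P(\xi)$ is computed in a symmetric monoidal fashion, i.e.\ that a symmetric monoidal map out of an $\einfty$-space produces an $\einfty$-structure on the colimit and that this structure is compatible with $\splus$. Once that is granted, everything else is a formal consequence of the adjunctions \eqref{eq:40}, \eqref{eq:41}, and \eqref{eq:46}.
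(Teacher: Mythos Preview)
Your proposal is correct and arrives at the same formula by the same ingredients, but the order of operations differs from the paper in a way worth noting. You first try to endow $P(\xi)$ with a group-like $\einfty$ structure \emph{directly}, by arguing that a symmetric monoidal functor out of a group-like $\einfty$ space has an $\einfty$ colimit, and only afterward invoke \cite[Lemma~8.23]{0810.4535v3} to identify $P(\xi)$ with $\linf p$. The paper reverses this: it defines $p$ as the pullback in the diagram \eqref{eq:43} \emph{of spectra}, so $p$ is a $(-1)$-connected spectrum automatically, and hence $\linf p$ is a group-like commutative $\ptspace$-algebra by the equivalence of \eqref{eq:46}; the identification $P(\xi)\heq\linf p$ from Lemma~8.23 then transports this structure to $P(\xi)$ with no further work. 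In particular, the ``main obstacle'' you flag --- checking that the colimit defining $P(\xi)$ is compatible with the symmetric monoidal structure --- is entirely sidestepped in the paper's argument: the $\einfty$ structure comes for free from the fact that fibers in spectra are spectra. Your route is not wrong, but it asks you to verify something the paper never needs.
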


\begin{Example}
Taking $A$ to be the sphere spectrum in Proposition
\ref{t-pr-thom-spec-comm-A-alg}, we recover the result of
\cite{LMS:esht} that the Thom spectrum of an $\i$-loop map
\[
   B \to B\GL{S}
\]
is a commutative $S$-algebra.
\end{Example}

\begin{Remark} \label{rem-2}
The formula \eqref{eq:9} provides one way to see that our Thom
spectrum coincides with the classical Thom spectrum of
\cite{MQRT:ersers,LMS:esht}.  One way to compute the smash product in
\eqref{eq:9} is to realize it as a two-sided bar
construction \cite[Proposition 7.5]{EKMM}.  In particular if $\xi\colon
X\to B\GL{S}$ classifies a
spherical fibration, then one expects
\begin{equation}\label{eq:10}
 X^{\xi} \heq \splus P (\xi)\Smash_{\splus \GL{S}} S \heq B (\splus P
 (\xi), \splus \GL{S}, S).
\end{equation}
It is not difficult to see that the constructions of
\cite{MQRT:ersers,LMS:esht} provide careful models for the two-sided
bar construction on the right-hand side of \eqref{eq:10}.
Note that some care is required to make this proposal precise; see
\S8.6 of \cite{0810.4535v3} for details.
\end{Remark}

\section{Application: $K (\Z,3),$ twisted $K$-theory, and the
$Spin^{c}$ orientation}
\label{sec:application:-k-z}

\subsection{}
\label{sec:formal-Kthy-twists}

Recall that $BSpin^{c}$ participates in a fibration of infinite loop
spaces
\begin{equation}\label{eq:42}
   K (\Z,2) \rightarrow \Bspinc  \rightarrow BSO \xra{bw_{2}} K (\Z,3),
\end{equation}
where $bw_{2}$ is the composite of the usual $w_{2}$ with
the $\Z$-Bockstein
\[
   BSO \xra{w_{2}} K (\Z/2,2) \xra{b} K (\Z,3).
\]
Passing to Thom spectra in \eqref{eq:42} we have a map of commutative
$S$-algebras
\[
   \splus K (\Z,2) \to MSpin^{c}.
\]
It's a theorem of Joachim  \cite{MR2122155} that the orientation
\[
   MSpin^{c} \to K
\]
of Atiyah-Bott-Shapiro is map of commutative $S$-algebras, and so we
have a sequence of maps
of commutative $S$-algebras
\begin{equation} \label{eq:49}
    \splus K (\Z,2) \rightarrow MSpin^{c} \to K.
\end{equation}
The $(\splus\linf,\glsym$) adjunction \eqref{eq:46}
produces from \eqref{eq:49} a map of infinite loop spaces
\[
   K (\Z,2) \rightarrow \GL{K}
\]
which we deloop once to view as a map
\[
   T\colon K (\Z,3) \rightarrow B\GL{K}.
\]
That is, the fact that the Atiyah-Bott-Shapiro orientation is a map of
(commutative) $S$-algebras implies that we have a homotopy-commutative
diagram
\begin{equation}\label{eq:48}
\begin{CD}
K (\Z,2) @>>> \GL{K} \\
@VVV @VVV \\
\Bspinc @>>> \ptspace \\
@VVV @VVV \\
BSO @> j >> B\GL{K} \\
@V \beta w_{2} VV @VV = V \\
K (\Z,3) @> T >> B\GL{K}.
\end{CD}
\end{equation}

Now suppose given a map $\alpha\colon X \to K (\Z,3)$.  Then we may form
the Thom spectrum $X^{T\alpha}$, and define
\[
   K^{n} (X)_{\alpha} \eqdef \pi_{0}\Mod{K} (X^{T\alpha}, \Sigma^{n}K).
\]
We then have the following.

\begin{Proposition}
A map $\alpha\colon X \to K (\Z,3)$ gives rise to a twist $K^{*} (X)_{\alpha}
$ of the $K$-theory of $X$.  A choice of homotopy
\[
    T \beta w_{2} \Rightarrow j
\]
in the diagram \eqref{eq:48} above determines, for every oriented
vector bundle $V$
over $X$, an isomorphism
\begin{equation}\label{eq:47}
    K^{*} (X)_{\beta w_{2} (V)} \iso K^{*} (X^{V}).
\end{equation}
In this way, the characteristic class $\beta w_{2} (V)$ determines the
$K$-theory of the Thom spectrum of $V$.
\end{Proposition}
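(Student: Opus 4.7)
The plan is to deduce this proposition directly from the formal framework assembled in sections~\ref{sec:bundl-module-spectra}--\ref{sec:twist-gener-cohom}, combined with the diagram \eqref{eq:48} that encodes Joachim's theorem.

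First, the very existence of the twist is immediate once we have the map $T\colon K(\Z,3)\to B\GL{K}$ produced in \eqref{eq:48}. Given $\alpha\colon X\to K(\Z,3)$, the composite $T\alpha\colon X\to B\GL{K}$ classifies, via the equivalence $|\Line{K}|\heq B\GL{K}$ of \S\ref{sec:line_a-gla}, a bundle of $K$-lines $f_{T\alpha}\colon\Sing X\to \Line{K}$. Definition~\ref{def-f-twisted-A-homology} then produces the twisted cohomology $K^{n}(X)_{\alpha}\eqdef\pi_{0}\Mod{K}(X^{T\alpha},\Sigma^{n}K)$, exactly as stated.

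For the isomorphism \eqref{eq:47}, I would proceed in two steps. The first identifies the cohomology of the Thom spectrum with a twisted cohomology: if $V\colon X\to BSO$ is an oriented vector bundle and $j(V)$ denotes the composite $X\xra{V}BSO\to B\GL{S}\to B\GL{K}$, then Example~\ref{ex-9} (via Theorem~\ref{t-th-i-cat-thom-is-thom}) gives $X^{j(V)}\heq X^{V}\Smash K$ and therefore
\[
K^{*}(X^{V})\iso \pi_{0}\Mod{K}(X^{j(V)},\Sigma^{*}K) = K^{*}(X)_{j(V)}.
\]
The second step uses the chosen homotopy $T\beta w_{2}\Rightarrow j$ from \eqref{eq:48}. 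Precomposing with $V$ produces a homotopy $T\beta w_{2}(V)\Rightarrow j(V)$ of maps $X\to B\GL{K}$, equivalently a $1$-simplex in $\Fun(\Sing X,\Line{K})$ connecting the two classifying bundles $f_{T\beta w_{2}(V)}$ and $f_{j(V)}$. Since $\Line{K}$ is a Kan complex, this $1$-simplex is an equivalence in the $\i$-category of bundles of $K$-lines, and Definition~\ref{def-Thom-spectrum} produces an induced equivalence of Thom spectra $X^{T\beta w_{2}(V)}\heq X^{j(V)}$ in $\Mod{K}$. Combining this equivalence with the identification above yields the claimed isomorphism
\[
K^{*}(X)_{\beta w_{2}(V)} = K^{*}(X)_{T\beta w_{2}(V)}\iso K^{*}(X)_{j(V)}\iso K^{*}(X^{V}).
\]

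There is no substantive obstacle here; all the serious input, namely that the Atiyah--Bott--Shapiro orientation refines to a map of commutative $S$-algebras, has already been absorbed into the existence of $T$ and the commutativity of \eqref{eq:48}. The only point that merits explicit mention is that the Thom spectrum construction is a functor out of $\Fun(\Sing X,\Line{K})$, so equivalent bundles produce equivalent Thom spectra, which is precisely what lets a homotopy of classifying maps pass to an isomorphism of twisted cohomology groups.
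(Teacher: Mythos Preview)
Your proposal is correct and follows essentially the same route as the paper: use the chosen homotopy $T\beta w_{2}\Rightarrow j$ to identify $X^{T\beta w_{2}(V)}\heq X^{j(V)}$, then invoke Theorem~\ref{t-th-i-cat-thom-is-thom} (equivalently Example~\ref{ex-9}) to rewrite $K^{*}(X)_{j(V)}$ as $K^{*}(X^{V})$. The paper presents this as a single chain of isomorphisms rather than two labeled steps, but the content is identical.
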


\begin{proof}
We prove the isomorphism \eqref{eq:47} to show how simple it is from
this point of view.  Consider the homotopy commutative diagram
\[
\begin{CD}
X @> V >> BSO @> j >> B\GL{K} \\
@. @V \beta w_{2} VV @VV = V \\
@. K (\Z,3) @> T >> B\GL{K}.
\end{CD}
\]
Omitting the gradings, we have
\begin{align*}
   K^{0} (X)_{\beta w_{2} (V)} & = \pi_{0}\Mod{K} (X^{T\beta w_{2} (V)},K) \\
 & \iso \pi_{0}\Mod{K} (X^{j (V)},K) \\
 & \iso \pi_{0}\Mod{K} (X^{V}\Smash K,K) \\
 & \iso \pi_{0} \spectra (X^{V},K) \\
 & = K^{0} (X^{V}).
\end{align*}
The first isomorphism uses the construction of $X^{\xi}$ together with
the fact that $T\beta w_{2}$ and $j$ are homotopic as maps $BSO\to
B\GL{K}$.  The second isomorphism is Theorem
\ref{t-th-i-cat-thom-is-thom}.
\end{proof}

\begin{Remark}  \label{rem-1}
We needn't have started with an oriented bundle.  For example, let $F$
be the fiber in the sequence
\[
   F \to BSpin^{c} \to BO.
\]
This is a fibration of infinite loop spaces, and so
it deloops to give
\[
   F \to BSpin^{c} \to BO \xra{\gamma} BF.
\]
The same argument produces an $\einfty$ map
\[
   \splus F \to MSpin^{c} \to K
\]
whose adjoint
\[
    F\to \GL{K}
\]
deloops to
\[
    \zeta\colon BF \to B\GL{K},
\]
and if $V$ is any vector bundle then
\[
   K (X^{V}) \iso K (X)_{\zeta \gamma (V)}.
\]
\end{Remark}

\subsection{Khorami's theorem}

At this point we are in a position to state a remarkable result of
M. Khorami \cite{Khorami}.  $K (\Z,2)$ is a group-like commutative
$\ptspace$-algebra, and so we have a bundle
\[
      K (\Z,2) \to EK (\Z,2) \heq \ptspace \to BK (\Z,2) \heq K (\Z,3).
\]
(One can build this bundle a number of ways: by modeling $K (\Z,2)\heq
PU$ as mentioned in \S\ref{sec:geom-models-twist},  or using infinite
loop space theory, or using the $\i$-category technology described above.)

Given a map $\zeta\colon X\to K (\Z,3)$, we can form the pull-back $K (\Z,2)$-bundle
\[
\begin{CD}
Q @>>> EK (\Z,2) \\
@VVV @VVV \\
X @>\zeta>> K (\Z,3).
\end{CD}
\]
On the other hand, let $P$ be the pull-back
\[
\begin{CD}
P @>>> E\GL{K} \\
@VVV @VVV \\
X @> T\zeta >> B\GL{K}
\end{CD}
\]
as in Proposition \ref{t-pr-thom-spec-comm-A-alg}.  One can check that
\[
     \splus P  \heq \splus Q \Smash_{\splus K (\Z,2)} \splus \GL{K},
\]
and so the Thom spectrum whose homotopy calculates the $\zeta$-twisted
$K$-theory is
\begin{equation}\label{eq:18}
    X^{T\zeta} \heq  \splus P \Smash_{\splus \GL{K}} K \heq
    \splus Q \Smash_{\splus K (\Z,2)} \splus \GL{K} \Smash_{\splus
    \GL{K}} K \heq  \splus Q \Smash_{\splus K (\Z,2)} K,
\end{equation}
where the map of commutative $S$-algebras $\splus K (\Z,2) \to K$ is
\eqref{eq:49}.

The formula \eqref{eq:18} implies that there is a spectral
sequence (see for example \cite[Theorem 4.1]{EKMM})
\begin{equation}\label{eq:20}
    \Tor^{K_{*} K (\Z,2)}_{*} (K_{*}Q, K_{*}) \Rightarrow
    \pi_{*}X^{T\zeta} \iso K_{*} (X)_{\zeta}.
\end{equation}
Note that $K_{*} K (\Z,2)\iso K_{*}\{\beta_{1},\beta_{2},\ldots \},$ so $K_{*}$
is not a flat $K_{*}K (\Z,2)$-module.  Nevertheless Khorami proves the
following.

\begin{Theorem}
In \eqref{eq:20} one has $\Tor_{q}=0$ for $q>0$, and so
\[
  K_{*} (X)_{\zeta} \iso K_{*} Q \otimes_{K_{*} K (\Z,2)}  K_{*}.
\]
\end{Theorem}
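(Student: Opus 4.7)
The plan is to deduce the $\Tor$-vanishing as a formal consequence of Snaith's theorem, which exhibits $K$ as a smashing localization of $\splus K (\Z,2)$; once this is in hand, the claim follows because smashing localizations are exact.

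First, I would recall Snaith's theorem: there is an equivalence of commutative $S$-algebras $K \simeq \splus K (\Z,2)[\beta^{-1}]$, where $\beta \in \pi_2 \splus K (\Z,2)$ is the class represented by the inclusion $\C P^1 \hookrightarrow \C P^\infty = K (\Z,2)$. Equivalently, the canonical $\einfty$-map $\splus K (\Z,2) \to K$ is initial among $\einfty$-ring maps out of $\splus K (\Z,2)$ that send $\beta$ to a unit.

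Second, I would identify the map $\splus K (\Z,2) \to K$ appearing in \eqref{eq:49} with the Snaith localization. Both are $\einfty$-ring maps out of $\splus K (\Z,2)$, and under \eqref{eq:49} the class $\beta$ is carried to the Bott element of $\pi_2 K$, which is a unit. By the universal property of Snaith's construction the two maps therefore agree, so $K$ is a smashing localization of $\splus K (\Z,2)$ via the $\Spinc$ orientation.

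Third, base-change. Smashing Snaith's equivalence with $K$ over $S$ gives $K \Smash K \simeq B[\bar\beta^{-1}]$, where $B \eqdef K \Smash \splus K (\Z,2)$ and $\bar\beta \in \pi_2 B = K_2 K (\Z,2)$ is the image of $\beta$. Hence $K$ is also a smashing localization of $B$ as a commutative $B$-algebra, so the functor $(-) \Smash_B K$ on $B$-modules is exact and $\Tor^{\pi_* B}_q (-, \pi_* K) = 0$ for all $q > 0$. Since $\pi_* B = K_* K (\Z,2)$ and $\pi_* K = K_*$, applying the Künneth spectral sequence \cite[Theorem 4.1]{EKMM} to the $B$-modules $K \Smash \splus Q$ and $K$ collapses \eqref{eq:20} to the claimed isomorphism.

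The main obstacle is the identification carried out in the second step. This is where the multiplicative refinement of the Atiyah--Bott--Shapiro orientation by Joachim is essential, since without the $\einfty$-structure on the map \eqref{eq:49} one cannot invoke the universal property of Snaith's localization. From this point of view the theorem expresses how well the twist interacts with Snaith's presentation of $K$-theory: the ``twisted'' augmentation $K_* K (\Z,2) \to K_*$ coming from \eqref{eq:49} is a localization (hence flat), whereas the ``untwisted'' augmentation coming from $K (\Z,2) \to *$ is a quotient with genuine higher $\Tor$.
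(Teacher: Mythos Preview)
The paper does not itself prove this statement; it merely records it as a theorem of Khorami and cites \cite{Khorami}. So there is no ``paper's own proof'' to compare against. What I can evaluate is whether your argument is correct.

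Your third step contains a genuine error. From $K\Smash K\simeq B[\bar\beta^{-1}]$ you conclude that \emph{$K$} is a smashing localization of $B$; but $K\Smash K$ is not $K$. What Snaith's theorem gives after base change is that the map $B\to K\Smash K$ is the localization inverting $\bar\beta$; the map $B\to K$ relevant to the spectral sequence is the further composite $B\to K\Smash K\xra{\mu} K$, and the multiplication map $\mu$ is not a localization. Concretely, rationalize: $K_{*}K(\Z,2)\otimes\Q\cong\Q[u^{\pm 1},b]$ with $|b|=2$, and the map to $K_{*}\otimes\Q=\Q[u^{\pm 1}]$ sends $b$ to a nonzero multiple of $u$. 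This is a quotient by a regular element, not a localization, so $K_{*}$ is \emph{not} flat over $K_{*}K(\Z,2)$. Indeed the paper says exactly this in the sentence preceding the theorem (``$K_{*}$ is not a flat $K_{*}K(\Z,2)$-module''), which is why the result is described as ``remarkable.'' Your claim that $\Tor^{\pi_{*}B}_{q}(-,\pi_{*}K)=0$ for \emph{all} modules and all $q>0$ is therefore false.

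What your Snaith argument \emph{does} prove is that $K_{*}=\pi_{*}K$ is flat over $\pi_{*}\splus K(\Z,2)$, so the K\"unneth spectral sequence over $\splus K(\Z,2)$ (rather than over $B$) collapses. But that is a different spectral sequence, with $E_{2}=\Tor^{\pi_{*}\splus K(\Z,2)}(\pi_{*}\splus Q,K_{*})$, not the one in \eqref{eq:20}. Khorami's vanishing genuinely depends on the special structure of $K_{*}Q$ as a $K_{*}K(\Z,2)$-module (coming from $Q$ being a principal $K(\Z,2)$-bundle), and is not a formal consequence of flatness.
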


\section{Application: Degree-four cohomology and twisted elliptic cohomology}
\label{sec:appl-degr-four}

The arguments of \S\ref{sec:formal-Kthy-twists} apply equally well to
$String$ structures and the spectrum of topological modular forms.

Recall that spin vector bundles admit a degree four characteristic
class $\lambda$, which we may view as a map of infinite loop spaces
\[
    BSpin \xra{\lambda} K (\Z,4).
\]
Indeed this map detects the generator of $H^{4}BSpin \iso \Z.$
The fiber of $\lambda$ is called $\Bstring,$ and so we have maps of
infinite loop spaces
\[
   K (\Z,3) \xra{} \Bstring \to BSpin \xra{\lambda} K (\Z,4).
\]
Passing to Thom spectra, we get maps of commutative $S$-algebras
\[
    \splus K (\Z,3) \to \mstring \to MSpin.
\]
Let $\TMF$ be the spectrum of topological modular forms
\cite{Hopkins:icm2002}.   Ando, Hopkins, and Rezk
\cite{AHR:orientation} have produced a map of commutative $S$-algebras
\[
    \mstring \xra{\sigma} \TMF,
\]
and so we have a map of commutative $S$-algebras
\[
   \splus K (\Z,3) \to \TMF,
\]
whose adjoint (see (\ref{eq:40},\ref{eq:41},\ref{eq:46}))
\[
    K (\Z,3) \to \GL{\TMF}
\]
deloops to
\[
    T\colon K (\Z,4) \to B\GL{\TMF}.
\]
By construction, the map $T$ makes the diagram
\begin{equation}\label{eq:50}
\begin{CD}
K (\Z,3) @>>> \GL{\TMF} \\
@VVV @VVV \\
\Bstring @>>> \ptspace \\
@VVV @VVV \\
BSpin @> j >> B\GL{\TMF} \\
@V \lambda VV @VV = V \\
K (\Z,4) @> T >> B\GL{\TMF}
\end{CD}
\end{equation}
commute up to homotopy.  If $\zeta\colon X \to K (\Z,4)$ is a map, then we
may define
\[
    \TMF (X)^{k}_{\zeta} \eqdef \pi_{0}\Mod{\TMF} (X^{T\zeta},\Sigma^{k}\TMF),
\]
and so we have the following.

\begin{Proposition}
A map $\zeta\colon X \to K (\Z,4)$ gives rise to a twist $\TMF^{*} (X)_{\zeta}
$ of the $\TMF$-theory of $X$.  A choice of homotopy
\[
    T \lambda \Rightarrow j
\]
in the diagram \eqref{eq:50} above determines, for every map
\[
   V\colon X \xra{} BSpin,
\]
an isomorphism of $\TMF^{*} (X)$-modules
\begin{equation}\label{eq:7}
    \TMF^{*} (X)_{\lambda  (V)} \iso \TMF^{*} (X^{V}).
\end{equation}
In this way, the characteristic class $\lambda (V)$ determines the
$\TMF$-cohomology of the Thom spectrum of $V$.
\end{Proposition}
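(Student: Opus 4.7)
The plan is to follow verbatim the strategy of the proof just given in $\S$\ref{sec:formal-Kthy-twists} for the analogous $K$-theory statement, replacing $\Bspinc$ by $\Bstring$, $\beta w_{2}$ by $\lambda$, and $K$ by $\TMF$. The twist itself is immediate from the definitions: given $\zeta\colon X\to K(\Z,4)$, compose with the delooping $T\colon K(\Z,4)\to B\GL{\TMF}$ constructed immediately above and set
\[
   \TMF^{k}(X)_{\zeta} \eqdef \pi_{0}\Mod{\TMF}(X^{T\zeta},\Sigma^{k}\TMF),
\]
where $X^{T\zeta}$ is the Thom spectrum of Definition~\ref{def-Thom-spectrum}.

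For the isomorphism \eqref{eq:7}, I would first use the chosen homotopy $T\lambda\Rightarrow j$ in \eqref{eq:50} to produce a homotopy $T\lambda V \simeq jV$ of maps $X\to B\GL{\TMF}$, which yields an equivalence of $\TMF$-module Thom spectra $X^{T\lambda V}\heq X^{jV}$. Next, since $j\colon BSpin\to B\GL{\TMF}$ factors (by construction of diagram \eqref{eq:50}) as $BSpin\to BO\xra{BJ}B\GL{S}\to B\GL{\TMF}$, Theorem~\ref{t-th-i-cat-thom-is-thom} identifies $X^{jV}$ with $X^{V}\Smash_{S}\TMF$, where $X^{V}$ denotes the classical Thom spectrum of $V$. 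The extension-of-scalars adjunction then gives
\begin{align*}
\TMF^{k}(X)_{\lambda(V)}
 &= \pi_{0}\Mod{\TMF}(X^{T\lambda V},\Sigma^{k}\TMF) \\
 &\iso \pi_{0}\Mod{\TMF}(X^{V}\Smash_{S}\TMF,\Sigma^{k}\TMF) \\
 &\iso \pi_{0}\spectra(X^{V},\Sigma^{k}\TMF) \\
 &= \TMF^{k}(X^{V}),
\end{align*}
exactly as in Example~\ref{ex-9} and the proof of the $K$-theory Proposition above.

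The one step that requires more care than the formal $K$-theory analogue is the claim that \eqref{eq:7} is an isomorphism of $\TMF^{*}(X)$-modules, and this is where I expect the only genuine bookkeeping to live. The $\TMF^{*}(X)$-action on the left-hand side arises from the diagonal $\Sing X\to \Sing X\times \Sing X$, which after applying $\colim$ induces a map $X^{T\lambda V}\to (\splus X)\Smash_{S}X^{T\lambda V}$; the action on the right-hand side is induced by the Thom diagonal $X^{V}\to (\splus X)\Smash_{S}X^{V}$. The required compatibility reduces to the statement that the equivalence $X^{jV}\heq X^{V}\Smash_{S}\TMF$ of Theorem~\ref{t-th-i-cat-thom-is-thom} intertwines these two diagonals, a naturality assertion for the Thom construction under the diagonal of $\Sing X$. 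This is the main, and indeed essentially the only, place where one must go beyond copying the $K$-theory proof, but it poses no real obstacle.
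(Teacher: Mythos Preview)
Your proposal is correct and matches the paper's approach exactly: the paper does not write out a separate proof for this proposition, relying instead on the opening sentence of \S\ref{sec:appl-degr-four} (``The arguments of \S\ref{sec:formal-Kthy-twists} apply equally well\dots'') and the displayed chain of isomorphisms in the $K$-theory case, which is precisely the chain you reproduce. Your final paragraph on the $\TMF^{*}(X)$-module structure addresses a clause present in the $\TMF$ statement but not in the $K$-theory one, and the paper does not spell this out; your reduction to naturality of the Thom diagonal under the equivalence of Theorem~\ref{t-th-i-cat-thom-is-thom} is the right way to handle it.
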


\begin{Remark}\label{rem-3}
As in Remark \ref{rem-1}, we could have started with the fiber $F$
in the fibration sequence of infinite loop spaces
\[
   F \to \Bstring  \to BO \xra{\gamma} BF.
\]
We have a map of commutative $S$-algebras
\[
   \splus F \to \mstring \to \TMF
\]
whose adjoint
\[
    F\to \GL{\TMF}
\]
deloops to
\[
    \zeta\colon BF \to B\GL{\TMF},
\]
and if $V\colon X\to BO$ classifies a virtual vector bundle, then
\[
   \TMF^{*} (X^{V}) \iso \TMF^{*} (X)_{\zeta\gamma (V)}
\]
\end{Remark}

\section{Application: Poincare duality and twisted umkehr maps}
\label{sec:appl-poinc-dual}

Let $M$ be a compact smooth manifold with tangent bundle $T$ of rank
$d$.   Embed $M$ in $\R^{N}$, and then perform the Pontrjagin-Thom
construction: collapse to a point the complement of a tubular
neighborhood of $M$.  If $\nu$ is the normal bundle of the embedding,
this gives a map
\[
       S^{N} \to M^{\nu}.
\]
Desuspending $N$ times then yields a map
\begin{equation}\label{eq:2}
    \mu\colon S^{0} \to M^{-T}.
\end{equation}
As usual, the Thom spectrum admits a relative diagonal map
\[
     M^{-T}\xra{\Delta} \splus M \Smash M^{-T}
\]
(this is the map which gives the cohomology of the Thom spectrum the
structure of a module over the cohomology of the base).

If $E$ is a spectrum, then to a map
\[
    f\colon M^{-T}\to E
\]
we can associate the composition
\[
   S^{0}\xra{\mu} M^{-T} \xra{\Delta} \splus M \Smash M^{-T} \xra{
   1\Smash f} \xra{} \splus M \Smash E.
\]
Milnor-Spanier-Atiyah duality says that this procedure yields an isomorphism

\[
     E_{*} (M) \iso E^{-*} (M^{-T}).
\]
In the presence of a Thom isomorphism
\[
    E^{-*} (M^{-T}) \iso E^{d-*} (M)
\]
we have Poincar\'e duality
\[
    E_{*} (M) \iso E^{d-*} (M).
\]

Without a Thom isomorphism, we choose a map $\alpha$
\[
    M \xra{\alpha} BO
\]
classifying $\uln{d}-T$, and then we define $\tau (-T)$ to be the composition
\[
  \tau (-T)\colon  M \xra{\alpha} BO \xra{j} B\GL{S} \xra{} B\GL{E}.
\]
Then, following Example \ref{ex-9}, we have
\[
    E^{d-*} (M)_{\tau (-T)} \iso E^{d-*} (M^{\uln{d}-T}) \iso E^{-*}
    (M^{-T}) \iso E_{*} (M).
\]

Combining this with the results of sections
\ref{sec:formal-Kthy-twists} and \ref{sec:appl-degr-four}, we have the
following.

\begin{Proposition}
Suppose that $M$ is an oriented compact manifold of dimension $d$.  Then
\[
    K_{*} (M) \iso K^{d-*} (M)_{-\beta w_{2} (M)}.
\]
If $M$ is a spin manifold, then
\[
    \TMF_{*} (M)\iso \TMF^{d-*} (M)_{-\lambda (M)}.
\]
\end{Proposition}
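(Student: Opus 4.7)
The plan is to combine the generalized Atiyah--Milnor--Spanier duality argument with which the section opens with the twisted Thom isomorphisms of Sections~\ref{sec:formal-Kthy-twists} and~\ref{sec:appl-degr-four}. The discussion immediately preceding the proposition already furnishes, for any spectrum $E$ and compact manifold $M$ of dimension $d$ with tangent bundle $T$, a natural isomorphism
\[
E_{*}(M) \iso E^{d-*}(M)_{\tau(-T)},
\]
where $\tau(-T)$ is the twist determined by the composition $M \xra{\alpha} BO \xra{j} B\GL{S} \to B\GL{E}$ with $\alpha\colon M \to BO$ classifying $\uln{d} - T$. The only remaining work is to recognize $\tau(-T)$ in terms of the appropriate characteristic class in each of the two cases.

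For $E = K$, the orientation of $M$ lifts $\alpha$ through $BSO$. The homotopy commutative diagram \eqref{eq:48} then shows that the composite $BSO \to B\GL{K}$ is homotopic to $BSO \xra{\beta w_{2}} K(\Z,3) \xra{T} B\GL{K}$, so $\tau(-T)$ agrees with the twist associated to the class $\beta w_{2}(\uln{d} - T) \in H^{3}(M;\Z)$. Because $\beta w_{2}\colon BSO \to K(\Z,3)$ was arranged in \S\ref{sec:formal-Kthy-twists} to be an infinite loop map (with delooping arising from the $\einfty$ orientation $\splus K(\Z,2) \to MSpin^{c} \to K$), it is additive on virtual bundles and vanishes on trivial bundles, so $\beta w_{2}(\uln{d} - T) = -\beta w_{2}(M)$. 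Substituting into the duality isomorphism gives $K_{*}(M) \iso K^{d-*}(M)_{-\beta w_{2}(M)}$.

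The $\TMF$ statement is proved by the same argument with \eqref{eq:50} in place of \eqref{eq:48}: a spin structure lifts $\alpha$ through $BSpin$, the composite $BSpin \to B\GL{\TMF}$ factors (up to homotopy) through $\lambda\colon BSpin \to K(\Z,4)$ followed by $T$, and additivity of $\lambda$ as an infinite loop map (ultimately coming from the $\einfty$ $\mstring$-orientation of Ando--Hopkins--Rezk) yields $\lambda(\uln{d} - T) = -\lambda(M)$. The main, and essentially only, point that requires care is precisely this sign, which rests on $\beta w_{2}$ and $\lambda$ having been delooped through $B\GL{K}$ and $B\GL{\TMF}$ as infinite loop maps; granted that, both assertions are immediate specializations of the general Atiyah duality with twist already in hand.
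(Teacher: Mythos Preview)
Your proposal is correct and follows exactly the route the paper indicates: the paper does not give a detailed proof but simply says ``Combining this with the results of sections~\ref{sec:formal-Kthy-twists} and~\ref{sec:appl-degr-four}, we have the following,'' and you have unpacked precisely that combination, using the general twisted duality $E_{*}(M)\iso E^{d-*}(M)_{\tau(-T)}$ together with diagrams~\eqref{eq:48} and~\eqref{eq:50} to identify the twist. Your care with the sign (via additivity of the infinite loop maps $\beta w_{2}$ and $\lambda$) is the one point the paper leaves implicit, and you handle it correctly.
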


\subsection{Twisted umkehr maps}
\label{sec:twisted-umkehr-maps}

In this section we sketch the construction of some umkehr maps in
twisted generalized cohomology.  Note that
similar constructions are studied in
\cite{arixv:math/0507414,MR2271789,arXiv:math/0611225}.  Also, since this
paper was written, we have learned that Bunke, Schneider, and
Spitzweck have independently developed a similar approach to twisted
umkehr maps.

Suppose that we have a family of compact spaces over $X$, that is, a
map of $\i$-categories
\[
    \zeta\colon \Sing X \to \CatOf{compact spaces}.
\]
We also have the trivial map
\[
     \ptspace_{X}\colon \Sing X \xra{\ptspace} \CatOf{compact spaces}.
\]
If $M$ is a compact space, let
\[
   DM = F (\splus M,S^{0})
\]
be the Spanier-Whitehead dual of $M_{\pt}$: this is a functor of $\i$-categories
\[
    D\colon \CatOf{compact spaces}^{\text{op}} \to \spectra.
\]
The
projection
\[
    M \to \ptspace
\]
gives rise to a map of spectra
\begin{equation}\label{eq:1}
    S^{0}\iso D\ptspace \to DM.
\end{equation}
Indeed if $M$ is a compact manifold with tangent bundle $T$, then
Milnor-Spanier-Atiyah duality says that $DM\heq M^{-T}$, in such a way
that the Pontrjagin-Thom map \eqref{eq:2} identifies with \eqref{eq:1}.

In any case, let $S_{X}^{0} = D\ptspace_{X}$.  We have a natural map
\[
   u\colon S_{X}^{0} \to D\zeta
\]
of bundles of spectra over $X$.  Essentially, we are applying the map
\eqref{eq:1} fiberwise.

It follows from Proposition 7.7 of \cite{0810.4535v3} that
\[
      X^{S^{0}_{X}} \heq \splus X.
\]
As for $X^{D\zeta}$, in a forthcoming paper we prove the following.

\begin{Proposition}\label{t-pr-tangents-along-fiber}
Suppose that $\zeta$ arises from a bundle
\[
    Y \xra{f} X
\]
of compact manifolds, and let $Tf$ be its
bundle of tangents along the fiber.  Then
\[
    X^{D\zeta} \heq Y^{-Tf}.
\]
In particular, passing to Thom spectra on $u$ gives a map of spectra
\begin{equation} \label{eq:3}
   t\colon \splus X \to Y^{-Tf}.
\end{equation}
This map is equivalent to the classical stable transfer map associated
to $f$.
\end{Proposition}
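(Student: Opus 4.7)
My strategy is to identify $D\zeta\colon\Sing X\to\spectra$ with the $\i$-categorical pushforward $f_{!}(\tau)$ of the Thom-spectrum functor for $-Tf$ on $Y$, and then to compute $X^{D\zeta}=p_{!}(D\zeta)$ using the composition $p_{!}\circ f_{!}\heq(pf)_{!}$ of left Kan extensions, for $p\colon\Sing X\to\ptspace$.

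The virtual bundle $-Tf$ on $Y$ classifies a map $\Sing Y\to\Line{S}$; composing with $\Line{S}\to\Mod{S}=\spectra$ gives a functor $\tau\colon\Sing Y\to\spectra$ whose colimit, by Theorem~\ref{t-th-i-cat-thom-is-thom} and Definition~\ref{def-Thom-spectrum}, is the classical Thom spectrum $Y^{-Tf}$. Pointwise, $(f_{!}\tau)_{x}\heq\colim(\Sing Y_{x}\xra{\tau|_{Y_{x}}}\spectra)=(Y_{x})^{-TY_{x}}$, which by Atiyah-Milnor-Spanier duality is $D(Y_{x})=D\zeta(x)$. Promoting this to a natural equivalence $D\zeta\heq f_{!}(\tau)$ of functors $\Sing X\to\spectra$ is the main technical step; I would carry it out by constructing fiberwise Spanier-Whitehead duality via the parametrized symmetric monoidal structure afforded by Theorem~\ref{thm:comp} and \cite{MR2271789}, and checking naturality on simplices of $\Sing X$.

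Given this identification, Definition~\ref{def-Thom-spectrum} and the composition of left Kan extensions yield
\[
   X^{D\zeta}=p_{!}(D\zeta)\heq p_{!}f_{!}(\tau)\heq(pf)_{!}(\tau)\heq\colim(\Sing Y\xra{\tau}\spectra)\heq Y^{-Tf},
\]
proving the first claim. Applying $p_{!}$ to $u\colon S^{0}_{X}\to D\zeta$ and using $p_{!}(S^{0}_{X})\heq\splus X$ (which recovers Proposition~7.7 of~\cite{0810.4535v3}) produces the map $t\colon\splus X\to Y^{-Tf}$. To identify $t$ with the classical stable transfer, I would trace definitions: fiberwise, $u(x)$ is the Spanier-Whitehead unit $S^{0}\to D(Y_{x})$, modeled by the desuspended Pontrjagin-Thom collapse for an embedding of $Y_{x}$. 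Assembling these fiberwise embeddings into a single embedding $Y\hookrightarrow X\times\R^{N}$ over $X$ and applying $p_{!}$, one sees that $t$ is represented by the standard fiberwise Pontrjagin-Thom collapse $\splus X\to Y^{\nu}\heq\Sigma^{N}Y^{-Tf}$, which is one of the usual models for the Becker-Gottlieb transfer of $f$.

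The main obstacle, as indicated, is the coherence in the second paragraph: proving $D\zeta\heq f_{!}(\tau)$ as functors rather than merely fiberwise. The fiberwise equivalence is classical Atiyah-Milnor-Spanier duality, but gluing these into a natural equivalence requires a parametrized Spanier-Whitehead duality that is compatible with the $\i$-categorical machinery of Theorem~\ref{thm:comp}; this is presumably where the forthcoming paper concentrates its technical work.
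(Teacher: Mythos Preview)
The paper does not contain a proof of this proposition: the text immediately preceding it reads ``in a forthcoming paper we prove the following,'' so there is nothing in the present paper to compare your proposal against. Your outline---identify $D\zeta$ with $f_{!}\tau$ for $\tau$ the $(-Tf)$-twist on $Y$, then compute $X^{D\zeta}=p_{!}f_{!}\tau\heq(pf)_{!}\tau\heq Y^{-Tf}$, and recover the transfer by applying $p_{!}$ to the fiberwise Pontrjagin--Thom collapse---is a natural strategy and is consistent with the paper's point of view (colimits as left Kan extensions, Theorem~\ref{t-th-i-cat-thom-is-thom}, and the parametrized comparison of Theorem~\ref{thm:comp}). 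You have also correctly located the genuine content: upgrading the pointwise Atiyah--Milnor--Spanier duality $D(Y_{x})\heq (Y_{x})^{-TY_{x}}$ to a natural equivalence of functors $D\zeta\heq f_{!}\tau$ on $\Sing X$ requires a parametrized Spanier--Whitehead duality statement, and that is presumably exactly what the deferred argument supplies.
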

The map $t$, and indeed the idea that it arises from applying the map
\eqref{eq:2} fiberwise,  is classical; see for example
\cite{MR0377873}.   Casting it in our setting enables us to construct
twisted versions.

More precisely, suppose that $R$ is a commutative  $S$-algebra, and
suppose given a bundle of $R$-lines over $X$
\[
   \xi\colon \Sing X \xra{} \Line{R}.
\]
We then have a map of bundles of $R$-lines over $X$
\[
  u\Smash \xi\colon \xi \heq S^{0}_{X} \Smash \xi\to D\zeta \Smash \xi.
\]
Thus we have constructed a twisted umkehr map
\[
      R^{*} (X^{D\zeta})_{\xi} \to R^{*} (X)_{\xi}.
\]
In the situation of Proposition \ref{t-pr-tangents-along-fiber}, we
have a twisted transfer map
\begin{equation}\label{eq:4}
      R^{*} (Y^{-Tf})_{\xi} \to R^{*} (X)_{\xi}.
\end{equation}
About this we show the following.

\begin{Proposition}   \label{t-pr-twisted-umkehr}
Suppose that
\[
Y \xra{Tf} BO \xra{}B\GL{S} \xra{}B\GL{R},
\]
regarded as a map $\Sing Y \to \Line{R}$,
is homotopic to $\xi f\colon \Sing Y \xra{}\Sing X \to \Line{R}.$  A choice
of homotopy determines an isomorphism
\[
    R^{*} (Y) \iso   R^{*} (Y^{-Tf})_{\xi},
\]
and composing with the twisted transfer \eqref{eq:4} we have a twisted
umkehr
map $R^{*} (Y) \to R^{*} (X)_{\xi}$.
\end{Proposition}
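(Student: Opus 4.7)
The plan is to exhibit the isomorphism $R^{*}(Y) \iso R^{*}(Y^{-Tf})_{\xi}$ by using the hypothesized homotopy to trivialize a combined twist over $Y$, and then compose with the twisted transfer \eqref{eq:4}. Because $R$ is a commutative $S$-algebra, $B\GL{R}$ is a group-like $\einfty$-space (see Section~\ref{sec:mult-orient-comp}), so twists of $R$-theory add in $B\GL{R}$. Write $j(-Tf)\colon Y \to BO \to B\GL{S} \to B\GL{R}$ for the $R$-line twist coming from minus the vertical tangent bundle, and $f^{*}\xi = \xi f$ for the pullback of $\xi$ along $f$.

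The first step is to establish a twisted version of Proposition~\ref{t-pr-tangents-along-fiber}: for any bundle of $R$-lines $\eta\colon \Sing X \to \Line{R}$, there is a natural equivalence of $R$-modules
\[
X^{D\zeta \Smash \eta} \heq Y^{j(-Tf) + f^{*}\eta}.
\]
Taking $\eta = \xi$, the hypothesized homotopy $\xi f \heq j(Tf)$ provides a null-homotopy of $j(-Tf) + f^{*}\xi$ in $B\GL{R}$, i.e., a trivialization of the corresponding bundle of $R$-lines over $Y$. The Thom spectrum of a trivial $R$-line bundle over $Y$ is $\splus Y \Smash R$ in $\Mod{R}$, so we obtain $X^{D\zeta \Smash \xi} \heq \splus Y \Smash R$. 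Applying $\pi_{0}\Mod{R}(\slot, \Sigma^{*}R)$ and using the adjunction $\Mod{R}(M \Smash R, R) \iso \Mod{S}(M, R)$ then yields
\[
R^{*}(Y^{-Tf})_{\xi} = \pi_{0}\Mod{R}(X^{D\zeta \Smash \xi}, \Sigma^{*}R) \iso \pi_{0}\Mod{S}(\splus Y, \Sigma^{*}R) \iso R^{*}(Y).
\]
Postcomposing with the twisted transfer of \eqref{eq:4} produces the desired twisted umkehr map.

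The main obstacle is justifying the twisted pushforward formula of the first step. The untwisted version (with $\eta$ constant at $R$) is Proposition~\ref{t-pr-tangents-along-fiber}, whose proof is deferred to a forthcoming paper; to include the twist, the natural approach is an $\i$-categorical base-change identity asserting that the colimit of $D\zeta \Smash \eta$ over $\Sing X$ coincides with the colimit over $\Sing Y$ of $f^{*}D\zeta \Smash f^{*}\eta$, where $f^{*}D\zeta$ is identified fiberwise with $Y_{x}^{-TY_{x}}$ and reassembles into the $R$-module version of $-Tf$ as a bundle over $Y$. By Theorem~\ref{t-th-i-cat-thom-is-thom} the resulting Thom spectrum is the one classified by $j(-Tf) + f^{*}\eta$. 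The essential input is the standard compatibility of $\i$-categorical left Kan extensions with iterated colimits (see \HTT{4.2.3.10}), together with the fact that the additive structure on $B\GL{R}$ realizes the fiberwise smash product of bundles of $R$-lines.
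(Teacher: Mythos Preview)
The paper does not actually prove this proposition: it is announced (along with Proposition~\ref{t-pr-tangents-along-fiber}) as a result whose proof is deferred to a forthcoming paper, and the text moves directly to \S\ref{sec:motivation-d-brane-umkehr}. So there is no in-paper argument to compare against.

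Your outline is a sound reconstruction of the intended argument and is consistent with the paper's conventions. The crux is your ``twisted Proposition~\ref{t-pr-tangents-along-fiber}'', i.e.\ the identification $X^{D\zeta\Smash\eta}\heq Y^{j(-Tf)+f^{*}\eta}$; once this is in hand, the remaining steps are correct and essentially formal. The hypothesis gives a null-homotopy of $j(-Tf)+f^{*}\xi$ in $B\GL{R}$, the Thom spectrum of the trivial $R$-line over $Y$ is $\splus Y\Smash_{S}R$ (cf.\ Theorem~\ref{t-th-i-cat-thom-is-thom} and the discussion after Definition~\ref{def-Thom-spectrum}), and the free/forgetful adjunction as in \eqref{eq:32} yields $R^{*}(Y)$. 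You are right to flag that the twisted pushforward identity inherits the same debt the paper carries: its untwisted form is precisely Proposition~\ref{t-pr-tangents-along-fiber}, which is not proved here. Your sketch via iterated left Kan extensions and the identification of fiberwise smash with addition in $B\GL{R}$ is the expected route, but it is not a proof until the base-change step (that $\colim_{\Sing X}(D\zeta\Smash\eta)$ agrees with the colimit over $\Sing Y$ of the corresponding $R$-line bundle) is made precise.
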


\section{Motivation: D-brane charges in $K$-theory}

\label{sec:motivation-d-brane-umkehr}

\subsection{The Freed-Witten anomaly}
\label{sec:freed-witten-anomaly}

Let $j\colon D \to  X$ be an embedded submanifold, let $\nu$ be the normal
bundle of $j$, and suppose that $D$ carries a
complex vector bundle $\xi.$

Suppose moreover that $\nu$ carries a $\Spinc$-structure.  Then we can
form the $K$-theory push-forward
\[
j_{!}\colon K (D) \to K (X).
\]
In that situation Minasian and Moore and Witten discovered that it is
sensible to think of the $K$-theory class
\[
j_{!} (\xi) \in K (X)
\]
as the ``charge'' of the $D$-brane $D$ with Chan-Paton bundle
$\xi.$

If $\nu$ does not carry a $\Spinc$-structure, then we still have the
Pontrjagin-Thom construction
\[
    X \to D^{\nu}.
\]
Suppose we have a map $H\colon X \to K (\Z,3)$ making the diagram
\[
\begin{CD}
    D @> \nu >> BSO \\
@V j VV @VV bw_{2} V \\
X @> H >> K (\Z,3)
\end{CD}
\]
commute up to homotopy.  According to Proposition
\ref{t-pr-twisted-umkehr}, a homotopy
\[
   c\colon bw_{2}\heq Hj
\]
determines an isomorphism
\[
     K^{*} (D) \iso  K^{*} (D^{\nu})_{-H}
\]
(since $\nu=-Tj$), and then we have a twisted umkehr map
\begin{equation}\label{eq:45}
   j_{!}\colon K^{*} (D) \xra{} K^{*} (X)_{-H}.
\end{equation}

The class $j_{!} (\xi) \in K^{*}_{-H} (X)$ is evidently an analogue of
the charge in this situation.  The discovery of the condition that
there exists a class $H$ on $X$ such that $H|_{D} = W_{3} (\nu)$ is
due to Freed and Witten \cite{fw:asd}.

Although we discovered this push-forward in an attempt to understand
Freed and Witten's condition, we were not the first: it appeared,
formulated this way, in a paper of Carey and Wang
\cite{arixv:math/0507414}.   An important contribution of their work
is the construction, using the twisted $K$-theory of \cite{MR2172633},
of the umkehr map \eqref{eq:45}.

\section{An elliptic cohomology analogue}
\label{sec:an-ellipt-cohom}

Now suppose that we are given an
embedding of manifolds
\[
    j\colon M\to Y,
\]
and that $\nu = \nu (j)$ is equipped with a Spin structure.  Suppose
we have a map $H$
making the diagram
\begin{equation}\label{eq:51}
\begin{CD}
   M @> \nu >> BSpin \\
@V j VV @VV \lambda V \\
  Y @>  H >> K (\Z,4)
\end{CD}
\end{equation}
commute up to homotopy.  By Proposition \ref{t-pr-twisted-umkehr}, a
homotopy $c\colon \lambda\nu \heq Hj$ determines an
isomorphism
\[
   \TMF^{*} (M)\heq \TMF^{*} (D^{\nu})_{-H},
\]
and then we have
a homomorphism of
$\TMF^{*} (Y)$-modules
\begin{equation} \label{eq:6}
  \TMF^{*} (M) \heq \TMF^{*} (D^{\nu})_{-H} \rightarrow \TMF^{*} (Y)_{-H}.
\end{equation}

\begin{Remark}
The data of a configuration like \eqref{eq:5} together with the homotopy $c$
was studied by Wang \cite{MR2438341}, who calls it a \emph{twisted $String$
structure}.   In fact, it was predicted by
Kriz and Sati \cite{MR2086925,Sati-TCU} that $\TMF$ should be the natural
receptacle for $M$-brane charges.
\end{Remark}

\begin{Remark}
The authors are grateful to Hisham Sati for suggesting that we think
about diagrams like \eqref{eq:51}.  The on-going investigation of the
resulting twisted umkehr maps is joint work with him.
\end{Remark}

\section{Twists of equivariant elliptic cohomology}
\label{sec:twists-equiv-ellipt}

At present we do not know how to twist equivariant cohomology theories
in general; for that matter, equivariant Thom spectra are poorly
understood.  However, twists by degree four Borel cohomology play an
important role in equivariant elliptic cohomology.  We review two
instances to give the reader a taste of the subject.

Let $G$ be a connected and compact Lie group.  In 1994, Grojnowski
sketched the construction of a $G$-equivariant elliptic cohomology
$E_{G}$, based on a complex elliptic curve of the form $C_{q} =
\C/\Lambda \iso
\C^{\times}/q^{\Z};$ more generally, the
construction can be used to give a theory for the universal curve
over the complex upper half-plane (Grojnowski's paper is now available
\cite{Grojnowski:Ell-new}).  In the case of the circle, Greenlees
\cite{MR2168575} has given a complete construction of a rational
$S^{1}$-equivariant elliptic spectrum.

Note also that Jacob Lurie has obtained analogous and sharper results
about equivariant elliptic cohomology, in the context of his derived
elliptic curves.

The functor $E_{G}$ takes its values in sheaves of
$\O_{M_{G}}$-modules, where $M_{G}$ is  the complex abelian variety
\[
       M_{G}= (\cochars\otimes_{\Z} \CQ)/W.
\]
Note that the completion of $M_{G}$ at the origin is $\spf E (BG);$ in
general one has  $E_{G} (X)^{\wedge}_{0} \iso E (EG\times_{G}X),$
where $E$ is the non-equivariant elliptic cohomology associated to
$\CQ.$

Grojnowski points out
that a construction of Looijenga \cite{Looijenga:RootSystems} (see
also \cite[\S5]{Ando:AESO}) associates to a class $c \in H^{4} (BG)$ a line bundle $\anomaly{c}$
over $M_{G}$.  Thus if $X$ is any $G$-space, then we can form the $\O_{M_{G}}$-module
\[
     E_{G} (X)_{c}\eqdef E_{G} (X)\otimes \anomaly{c}
\]
This $E_{G} (X)$-module is a twisted form of  $E_{G} (X).$

\subsection{Representations of loop groups}

Already the case of a point is interesting: one learns that twisted
equivariant elliptic cohomology carries the characters of
representations of loop groups.  Suppose that $G$ is a simple and
simply connected Lie group, such as $SU (d)$ or $Spin (2d).$  Then
\[
     H^{4} (BG)\iso \Z.
\]
We then have the following result, due independently to the first
author \cite{Ando:EllLG} (who learned it from Grojnowski) and, in a
much more precise form involving derived equivariant elliptic
cohomology, Jacob Lurie.

\begin{Proposition}
Let $G$ be a simple and simply connected compact Lie group, and let
$\dfcc \in H^{4} (BG;\Z) \iso \Z.$  The character of a representation of the
loop group $LG$ of level $\dfcc$ is a section of $\anomaly{\dfcc},$
and the
Kac character formula shows that we have an isomorphism
\[
    R_{\dfcc} (LG) \iso \Gamma (E_{G} (\ptspace)_{\dfcc})
\]
after tensoring with $\Z\lsb{q}.$
\end{Proposition}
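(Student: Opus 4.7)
The plan is to identify both sides as spaces of holomorphic functions on $\cochars\otimes \CQ$ satisfying a quasi-periodicity law determined by $\dfcc$, and to check that these quasi-periodicity laws agree. I will treat $\dfcc\in H^{4}(BG;\Z)\iso\Z$ as a $W$-invariant integral quadratic form on the coroot lattice $\cochars$; this is the standard identification for simple simply-connected $G$.

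First I would unpack the right-hand side. By construction $E_{G}(\ptspace)\iso\O_{M_{G}}$, so $\Gamma(E_{G}(\ptspace)_{\dfcc})\iso\Gamma(M_{G},\anomaly{\dfcc})$. Pulling back along the quotient map $\VG{\CQ}\to\VGW{\CQ}=M_{G}$ and then along the exponential $\VG{\C}\to\VG{\CQ}$, a section of $\anomaly{\dfcc}$ becomes a holomorphic function $\theta$ on $\cochars\otimes\C$ which is $W$-invariant and which transforms under translation by $\cochars\oplus\tau\cochars$ (with $q=e^{2\pi i\tau}$) by an automorphy factor built from $\dfcc$ via Looijenga's cocycle construction \cite{Looijenga:RootSystems}. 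I would write out this cocycle explicitly in terms of the quadratic form $\dfcc$, because it is precisely at this point that the match with the loop group side has to be checked.

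Next I would describe the left-hand side. Positive energy representations of $LG$ at level $\dfcc$ are classified by integrable highest weights at level $\dfcc$, and $R_{\dfcc}(LG)\otimes\Z\lsb{q}$ is freely generated over $\Z\lsb{q}$ by the classes of the irreducible ones. The key input is the Kac character formula \cite[Ch.~13]{Kac:IDLA-ish}, which expresses the graded character of each such irreducible, viewed as a function of $(u,\tau)\in(\cochars\otimes\C)\times\halfplane$ with $q=e^{2\pi i\tau}$, as a ratio of theta functions; the resulting functions are $W$-anti-invariant over $W$-anti-invariant (hence $W$-invariant), holomorphic, and transform under $\cochars\oplus\tau\cochars$ by exactly the automorphy factor encoded by the level $\dfcc$. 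Thus each character is manifestly a section of a line bundle on $M_{G}$, and to finish one must identify this line bundle with $\anomaly{\dfcc}$.

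The main obstacle is precisely this identification of the Kac automorphy factor with Looijenga's cocycle. I would carry it out by choosing compatible representatives: on the Looijenga side one constructs $\anomaly{\dfcc}$ as the descent of the trivial bundle on $\VG{\C}$ along the $(\cochars\oplus\tau\cochars)\rtimes W$-action, where lattice translations act via the cocycle $(\lambda,\mu)\mapsto \exp(2\pi i\dfcc(\lambda,\slot)+\pi i\tau\dfcc(\lambda,\lambda))$ up to sign conventions; on the Kac side the same cocycle appears as the level of the central extension of $L\cochars$ inside $\widehat{LG}$, because the pullback of the basic central extension of $LG$ to $LT$ is determined by the same quadratic form $\dfcc$. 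Once these cocycles are identified, the Kac character formula gives an explicit basis of $\Gamma(M_{G},\anomaly{\dfcc})\otimes\Z\lsb{q}$ by the irreducible level-$\dfcc$ characters, and completeness of this basis (i.e.\ that theta functions of level $\dfcc$ for the affine Weyl group are spanned by Kac characters) then gives the stated isomorphism $R_{\dfcc}(LG)\otimes\Z\lsb{q}\iso\Gamma(E_{G}(\ptspace)_{\dfcc})$.
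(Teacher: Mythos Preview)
The paper does not give a proof of this proposition: it is stated as a result and attributed to \cite{Ando:EllLG} (and independently to Lurie), with the statement itself already pointing to the Kac character formula as the mechanism. Your proposal is exactly the argument one expects and, in outline, the argument of \cite{Ando:EllLG}: identify $\Gamma(E_{G}(\ptspace)_{\dfcc})=\Gamma(M_{G},\anomaly{\dfcc})$ with $W$-invariant theta functions on $\cochars\otimes\C$ for the Looijenga automorphy factor attached to the quadratic form $\dfcc$, and then use the Kac character formula to see that the irreducible level-$\dfcc$ characters are precisely such theta functions and span them. So there is nothing to compare against in this paper beyond confirming that your strategy matches the hint embedded in the proposition.

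One small caution on the write-up rather than the mathematics: you cite a reference (Kac's book) that is not in the paper's bibliography, and you should be explicit that the ``completeness'' step---that the Weyl-Kac characters give a $\Z\lsb{q}$-basis of the level-$\dfcc$ theta functions invariant under $W$---is a genuine theorem (the Kac--Peterson theory of theta functions for affine Lie algebras), not just a formal consequence of the numerator/denominator formula. Otherwise your sketch is sound.
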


It is fun to compare this result to the work of Freed, Hopkins,
and Teleman (for example \cite{FHT:tklgr}), who show that $R_{\dfcc}
(LG)$ is the twisted $G$-equivariant $K$-theory of $G$.  Thus we have
a map
\[
    \Gamma E_{G} (\ptspace)_{\dfcc} \to K_{G} (G)_{\dfcc}.
\]
This map is an instance of the relationship between elliptic
cohomology and the orbifold $K$-theory of the free loop space.  We
hope to provide a more extensive discussion in the future.

\subsection{The equivariant sigma orientation}

Let $\T$ be the circle group, and suppose that $V/X$ is a
$\T$-equivariant vector bundle with structure group
$G$ (in this section we suppose that $G=Spin (2d)$ or $G=SU (d)$).
Let $P/X$ be the associated principal bundle.    Then
\[
    E_{G\times \T} (P) \iso E_{\T} (X)
\]
is a sheaf of $E_{G} (\ptspace)=\O_{M_{G}}$-algebras,
and so we can twist $E_{\T} (X)$ by $\anomaly{c}$ for $c\in H^{4}
(BG)\iso \Z.$   Let $c$ be the generator corresponding to $c_{2}$ if
$G=SU (d)$ or the ``half  Pontrjagin class'' $\lambda$ if $G=Spin
(2d)$.  Note that $c$ determines a Borel equivariant class $c_{\T}.$

In \cite{Ando:AESO,AG:reso}, the authors show first of all that the
twist $E_{\T} (X)\otimes \anomaly{c}$ depends only on the equivariant
degree-four class $c_{\T} (V) \in H^{4}_{\T} (X;\Z),$
and so we may define
\[
  E_{\T} (X)_{c_{\T} (V)} = E_{\T} (X)\otimes \anomaly{c}.
\]
Second, they show that the Weierstrass
sigma function leads to an isomorphism
\begin{equation} \label{eq:52}
E_{\T} (X)_{c_{\T} (V)} \iso E_{\T} (X^{V});
\end{equation}
this is an analytic and equivariant form of the isomorphism
\eqref{eq:7}.

In the case that $c_{\T} (V) =0$ we conclude that
\begin{equation} \label{eq:5}
      E_{\T} (X)\iso E_{\T} (X^{V});
\end{equation}
this is the $\T$-equivariant sigma orientation in this context.  More
precisely,  we have the following.

\begin{Proposition}  \label{t-pr-twisted-ell-sigma-or}
Let $V/X$ be an $S^{1}$-equivariant $SU$ vector
bundle.  Let $c_{2}^{\T} (V) \in H^{4}_{\T} (X)$ be
the equivariant second Chern class of $V$.   Let $E_{\T}$ denote
Grojnowski's or Greenlees's $\T$-equivariant elliptic cohomology,
associated to the the complex  analytic elliptic curve $C.$  Then there is a
canonical isomorphism
\[
     E_{\T} (X)_{c_{2}^{\T} (V)} \iso E_{\T} (X^{V}),
\]
natural in $V/X.$  In particular if $V_{0}$ and $V_{1}$ are two such
bundles with $c_{2}^{\T} (V_{0}) = c_{2}^{\T} (V_{1})$, and
\[
    W= V_{0} - V_{1},
\]
then there is a canonical isomorphism
\[
     E_{\T} (X) \iso E_{\T}(X^{W}).
\]
\end{Proposition}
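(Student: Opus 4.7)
The plan is to reduce both claims to the isomorphism \eqref{eq:52}, specialized to $G=SU(d)$ with the generator $c=c_2 \in H^4(BSU(d);\Z)\iso \Z$. First I would record that, by construction, $E_{\T}(X)_{c_2^{\T}(V)} = E_{\T}(X) \otimes \anomaly{c_2}$ is well-defined precisely because the tensor product depends only on the equivariant class $c_2^{\T}(V) \in H^4_{\T}(X;\Z)$, as established in \cite{Ando:AESO,AG:reso}. The first displayed isomorphism is then just \eqref{eq:52} for this choice of $G$ and $c$, and its naturality in $V/X$ is the naturality of the Weierstrass sigma construction used there.

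For the second claim, I would exploit the multiplicativity of both sides separately. On the twist side, the Looijenga construction gives a homomorphism $H^4(BG;\Z) \to \Pic(M_G)$, so under the hypothesis $c_2^{\T}(V_0) = c_2^{\T}(V_1)$ the line bundles $\anomaly{c_2^{\T}(V_0)}$ and $\anomaly{c_2^{\T}(V_1)}$ are canonically isomorphic over $M_{\T}$, and the inverse pairing trivializes the twist. On the Thom-spectrum side, the defining relation $X^{V_0-V_1}\Smash_X X^{V_1}\simeq X^{V_0}$ for virtual bundles yields an equivalence of $E_{\T}(X)$-modules
\[
E_{\T}(X^W) \otimes_{E_{\T}(X)} E_{\T}(X^{V_1}) \iso E_{\T}(X^{V_0}).
\]
Substituting the first claim for both $V_0$ and $V_1$ and cancelling the canonically isomorphic invertible twists then produces the desired isomorphism $E_{\T}(X^W) \iso E_{\T}(X)$.

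The main obstacle, and the content that must be carried over from \cite{Ando:AESO,AG:reso}, is that the equivariant sigma orientation of \eqref{eq:52} is \emph{exponential}, i.e.\ multiplicative with respect to Whitney sums of $SU$-bundles, so that the Thom-spectrum identification for virtual bundles is compatible with the twists on the right-hand side. This exponentiality is intrinsic to the Weierstrass sigma function in its role as the canonical section of the twisting line bundle, and it is exactly what forces the assignment $V \mapsto E_{\T}(X^V)$ and $c \mapsto \anomaly{c}$ to match as group homomorphisms into $\Pic$ of the category of $E_{\T}(X)$-modules, modulo the standard verification that the virtual-bundle Thom spectrum behaves as claimed $\T$-equivariantly. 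Once exponentiality is in hand, the second claim is a two-line cancellation from the first.
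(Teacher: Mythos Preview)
The paper does not give an in-text proof of this proposition: it records in the subsequent Remark that the result is Theorem~11.17 of \cite{AG:reso} (and was Conjecture~1.14 in \cite{Ando:AESO}). The discussion immediately preceding the proposition is the only argument the paper offers, and it agrees with your reading of the first claim: it is exactly \eqref{eq:52} for $G=SU(d)$ and $c=c_{2}$, with naturality coming from the sigma-function construction in the cited papers.

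For the second claim the paper's packaging is slightly different from yours. The paper simply observes that $c_{2}^{\T}(W)=c_{2}^{\T}(V_{0})-c_{2}^{\T}(V_{1})=0$ and then invokes \eqref{eq:52} (equivalently \eqref{eq:5}) directly for the virtual bundle $W$. You instead apply \eqref{eq:52} to $V_{0}$ and $V_{1}$ separately and cancel the invertible twists using the Thom-spectrum relation $X^{V_{0}}\simeq X^{W}\Smash_{X}X^{V_{1}}$. These are equivalent maneuvers: both rest on the exponential/multiplicative property of the sigma orientation, which you correctly flag as the substantive content imported from \cite{Ando:AESO,AG:reso}. Your route has the minor advantage of only invoking \eqref{eq:52} for honest bundles and deducing the virtual case; the paper's route is terser but tacitly assumes \eqref{eq:52} has already been extended to virtual $SU$-bundles. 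Either way, nothing beyond what is established in the cited references is needed, and your proposal is correct.
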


\begin{Remark}
In \cite{Ando:AESO} the author constructs the  $\T$-equivariant sigma
orientation in Grojnowski's equivariant elliptic cohomology, for
$Spin$ and $SU$ bundles.  The construction was motivated by the
Proposition stated above, which however was given as Conjecture 1.14.
In \cite{AG:reso} the authors construct the $\T$-equivariant sigma orientation
for Greenlees's equivariant elliptic cohomology, for $\T$-equivariant
$SU$-bundles.  Proposition \ref{t-pr-twisted-ell-sigma-or} appears
there as Theorem  11.17.  It should not be difficult to adapt the
methods of these two papers to the case of $Spin$ bundles.
\end{Remark}

\begin{Remark}
The careful reader will note that in \cite{AG:reso} we show how to
twist $E_{\T}^{*}(X)$ by $c_{2}^{\T} (V) \in H^{4}_{\T} (X;\Z)$: we do
not there discuss twisting by general elements of $H^{4}_{\T} (X;\Z).$
The construction of such general twists of $E_{\T} (X)$ is the subject of
on-going work of the first author and Bert Guillou.
\end{Remark}

\begin{Remark}
Lurie has obtained similar and sharper results for the elliptic
cohomology associated to a derived elliptic curve.   In particular he
can construct the sigma orientation and twists by $H^{4}_{\T}(X;\Z).$
\end{Remark}


\newcommand{\etalchar}[1]{$^{#1}$}
\def\cprime{$'$} \def\cprime{$'$}

\end{document}